\documentclass{amsart}
\usepackage[utf8]{inputenc}
\usepackage{amssymb,latexsym}
\usepackage{amsmath}
\usepackage{graphicx}
\usepackage{textcomp}
\usepackage[dvipsnames]{xcolor}

\usepackage{amsthm,amssymb,enumerate,graphicx, tikz}
\usepackage{amscd}
\usepackage{setspace}
\usepackage{comment}
\usepackage{hyperref}
\usepackage{cleveref}

\def\@logofont{\footnotesize}
\textwidth=120mm
\textheight=210mm
\def\@setaddresses{\par
  \nobreak \begingroup
  \footnotesize
  \def\author##1{\nobreak\addvspace\bigskipamount}%
  \def\\{\par\nobreak}%
  \interlinepenalty\@M
  \def\address##1##2{\begingroup
    \par\addvspace\bigskipamount\indent
    \@ifnotempty{##1}{(\ignorespaces##1\unskip) }%
    {\scshape\ignorespaces##2}\par\endgroup}%
  \def\curraddr##1##2{\begingroup
    \@ifnotempty{##2}{\nobreak\indent\curraddrname
      \@ifnotempty{##1}{, \ignorespaces##1\unskip}\/:\space
      ##2\par}\endgroup}%
  \def\email##1##2{\begingroup
    \@ifnotempty{##2}{\nobreak\indent\emailaddrname
      \@ifnotempty{##1}{, \ignorespaces##1\unskip}\/:\space
      \ttfamily##2\par}\endgroup}%
  \def\urladdr##1##2{\begingroup
    \def~{\char`\~}%
    \@ifnotempty{##2}{\nobreak\indent\urladdrname
      \@ifnotempty{##1}{, \ignorespaces##1\unskip}\/:\space
      \ttfamily##2\par}\endgroup}%
  \addresses
  \endgroup
}
\renewcommand*\subjclass[2][2010]{%
  \def\@subjclass{#2}%
  \@ifundefined{subjclassname@#1}{%
    \ClassWarning{\@classname}{Unknown edition (#1) of Mathematics
      Subject Classification; using '2000'.}%
  }{%
    \@xp\let\@xp\subjclassname\csname subjclassname@#1\endcsname
  }%
}

\usepackage{graphicx,amsmath,amssymb}
\usepackage{algorithm}
\usepackage{mathdots, comment}
\usepackage[noend]{algpseudocode}

\newtheorem{theorem}{Theorem}[section]

\newtheorem*{theorem*}{Theorem}

\newtheorem{lemma}[theorem]{Lemma}

\newtheorem{corollary}[theorem]{Corollary}

\theoremstyle{definition}

\theoremstyle{remark}
\newtheorem{remark}[theorem]{Remark}
\newtheorem{example}[theorem]{Example}

\begin{document}
\title[Characterization of matchable sets and subspaces]{Characterization of matchable sets and subspaces via Dyson transforms}

\author[M. Aliabadi, J. Losonczy]{Mohsen Aliabadi$^{1}$ \and Jozsef Losonczy$^{2,*}$}
\thanks{$^1$Department of Mathematics, University of California, San Diego, 
9500 Gilman Dr, La Jolla, CA 92093, USA.  \url{maliabadisr@ucsd.edu}.\\
$^2$Department of Mathematics, Long Island University,
720 Northern Blvd, Brookville, New York 11548, USA. \url{Jozsef.Losonczy@liu.edu}.}
\thanks{$^*$Corresponding Author.}

\thanks{\textbf{Keywords and phrases}. Chowla set, Chowla subspace, Dyson transform, linear matching property, Sidon set}
\thanks{\textbf{2020 Mathematics Subject Classification}. Primary: 05D15; Secondary: 11B75; 12F99. }

\begin{abstract}
A \emph{matching} from a finite subset \( A \) of an abelian group \( G \) to another subset \( B \) is a bijection \( f : A \to B \) such that \( af(a) \notin A \) for all \( a \in A \). The study of matchings began in the 1990s and was motivated by a conjecture of E.~K.~Wakeford on canonical forms for homogeneous polynomials. The theory was later extended to the linear setting of vector subspaces over field extensions, and then to matroids. In this paper, we investigate the existence and structure of matchings in both abelian groups and field extensions. Using Dyson's \( e \)-transform, a tool from additive combinatorics, along with a linear analogue which is introduced in this paper, we establish characterization theorems for matchable sets and subspaces.  Several applications are given to demonstrate the effectiveness of these theorems as standalone tools. Throughout, we highlight the parallels between the group-theoretic and linear perspectives.
\end{abstract}

\maketitle

\section{Introduction}

\textbf{History of matchings.} A geometric framework for a class of bipartite graphs was introduced in~\cite{Losonczy 1}, where a special type of perfect matching, termed an acyclic matching, was defined and shown to exist for certain graphs using geometric techniques. The existence of such matchings is closely tied to the non-vanishing of determinants of specific weighted biadjacency matrices. This setup was applied to a conjecture of E.~K.~Wakeford~\cite{Wakeford} which dates to 1916 and involves determining the sets of monomials that can be eliminated from a generic homogeneous polynomial through linear changes of variables.

In a notable special case, Wakeford's conjecture was reduced to the problem of showing that acyclic matchings exist for certain pairs of subsets in \( \mathbb{Z}^n \). The full conjecture remains open, but the acyclic matching property introduced in~\cite{Losonczy 1} was later shown to hold in the most general sense for \( \mathbb{Z}^n \) by Alon et al.~\cite{Alon}, and was completely characterized for all abelian groups in \cite{Taylor}. Further developments appeared in~\cite{Losonczy 2}, where a broader class of matchings was investigated in abelian groups.  The theory was then extended to arbitrary groups by Eliahou and Lecouvey~\cite{Eliahou 1}. Related counting aspects were explored by Hamidoune~\cite{Hamidoune}.

A linear formulation of matchings was introduced in~\cite{Eliahou 2}, providing an analogue in the setting of field extensions, while a matroidal version was proposed in~\cite{Aliabadi 3}. A recent application of matchings in abelian groups within combinatorial number theory can be found in~\cite{Lev}.\\

\textbf{Organization of paper.} In the following two sections, we first revisit a pair of foundational results on matchings which will be used in the abelian group setting (Section~\ref{abeliab sec}), and then transition to the linear-algebraic framework, outlining the necessary background information on matchable subspaces over field extensions (Section~\ref{linear sec}). Building upon these backgrounds, Sections~\ref{MIBG} and~\ref{MSFE} present our main contributions, offering characterizations of matchable sets in abelian groups and matchable subspaces in field extensions, respectively, as stated in Theorems~\ref{CharForAbelianGroups} and~\ref{main linear}. We will also give several applications to show that these theorems are effective tools on a standalone basis, eliminating a longstanding reliance on a variety of inequalities from additive number theory and related areas.

\subsection{Preliminaries on matchings (abelian group setting)}\label{abeliab sec}

Let \( A \) and \( B \) be nonempty finite sets of the same cardinality, and let \( \mathcal{G} \) be a subset of \( A \times B \). A bijective mapping \( f : A \longrightarrow B \) is called a {\em matching} of \( \mathcal{G} \) if \( (a, f(a)) \in \mathcal{G} \) for all \( a \in A \).  Note that \( A \) and \( B \) are not required to be disjoint.

For \( S \subseteq A \) and \( T \subseteq B \), we define 
\[ \mathcal{G}_1(S) = \{ b \in B :  (a,b) \in \mathcal{G}\mbox{ for some }a \in S \}, \]
\[ \mathcal{G}_2(T) = \{ a \in A :  (a,b) \in \mathcal{G}\mbox{ for some }b \in T \}, \]
and for \( a \in A \) and \( b \in B \), let \(d_1(a) = |\mathcal{G}_1(\{ a \})| \) and \(d_2(b) = |\mathcal{G}_2(\{ b \})| \).  In the first part of the paper, where we consider matchings in abelian groups, we will make use of two well-known results from matching theory. For convenience, we state them here using the above notation. Proofs can be found in \cite{Lovasz}. The first is Philip Hall's marriage theorem, which gives a necessary and sufficient condition for the existence of a matching.

\begin{theorem}[P.\ Hall]  \label{PhiliplHall}
Let \( A \) and \( B \) be nonempty finite sets such that \( |A| = |B| \), and let \( \mathcal{G} \) be a subset of \( A \times B \). Then there exists a matching of \( \mathcal{G} \) if and only if for every nonempty subset \( S \) of \( A \), we have \( |S| \leq | \mathcal{G}_1(S)|. \)
\end{theorem}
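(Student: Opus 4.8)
The necessity is the easy half, so I would dispose of it first: if \( f \) is a matching of \( \mathcal{G} \), then for any nonempty \( S \subseteq A \) the elements \( \{ f(a) : a \in S \} \) are \( |S| \) distinct members of \( B \), and each lies in \( \mathcal{G}_1(S) \) since \( (a, f(a)) \in \mathcal{G} \). Hence \( |S| = |f(S)| \leq |\mathcal{G}_1(S)| \), which is Hall's condition. The substance of the theorem is the converse, and I would prove it by induction on \( n = |A| = |B| \). The base case \( n = 1 \) is immediate: Hall's condition applied to \( S = A \) yields some \( b \) with \( (a,b) \in \mathcal{G} \), and we set \( f(a) = b \).

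For the inductive step, assuming Hall's condition holds, I would split into two cases according to whether the inequality is always strict on proper subsets or is tight somewhere. In the \emph{slack case}, suppose every proper nonempty \( S \subset A \) satisfies \( |\mathcal{G}_1(S)| \geq |S| + 1 \). Pick any \( a \in A \) and any \( b \in \mathcal{G}_1(\{a\}) \), tentatively setting \( f(a) = b \). Passing to \( A \setminus \{a\} \) and \( B \setminus \{b\} \) (which again have equal cardinality \( n-1 \)), every nonempty \( S \subseteq A \setminus \{a\} \) is proper in \( A \), so its neighborhood in the reduced problem satisfies \( |\mathcal{G}_1(S) \setminus \{b\}| \geq |\mathcal{G}_1(S)| - 1 \geq |S| \). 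Thus Hall's condition is inherited, the induction hypothesis supplies a matching of the reduced system, and adjoining the pair \( (a,b) \) completes a matching of \( \mathcal{G} \).

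The main obstacle is the \emph{tight case}, where some proper nonempty \( S_0 \subset A \) satisfies \( |\mathcal{G}_1(S_0)| = |S_0| \); here I must verify that \emph{both} induced subproblems inherit Hall's condition. Writing \( T_0 = \mathcal{G}_1(S_0) \), the first subproblem matches \( S_0 \) to \( T_0 \): for nonempty \( S \subseteq S_0 \) one has \( \mathcal{G}_1(S) \subseteq T_0 \) with \( |\mathcal{G}_1(S)| \geq |S| \), so since \( |S_0| < n \) the induction hypothesis yields a matching \( f_1 : S_0 \to T_0 \). The second subproblem matches \( A' = A \setminus S_0 \) to \( B' = B \setminus T_0 \) using edges in \( \mathcal{G} \cap (A' \times B') \). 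The key computation is that for nonempty \( S \subseteq A' \), applying Hall's condition to \( S \cup S_0 \) and using \( \mathcal{G}_1(S \cup S_0) = \mathcal{G}_1(S) \cup T_0 \) gives \( |S| + |S_0| \leq |\mathcal{G}_1(S) \cup T_0| = |S_0| + |\mathcal{G}_1(S) \setminus T_0| \), whence the reduced neighborhood \( \mathcal{G}_1(S) \cap B' \) has size at least \( |S| \). As \( |A'| = |B'| = n - |S_0| < n \), induction furnishes a matching \( f_2 : A' \to B' \), and \( f_1 \cup f_2 \) is the desired matching of \( \mathcal{G} \). The delicate point throughout is the bookkeeping that confirms the surplus counted globally is exactly what is needed locally in each piece.
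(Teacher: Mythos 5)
Your proof is correct and complete: the necessity argument is the standard one, and the sufficiency argument is the classical Halmos--Vaughan induction, with both delicate points handled properly (in the slack case, Hall's condition survives deleting the pair \( (a,b) \) because every nonempty \( S \subseteq A \setminus \{a\} \) is proper in \( A \); in the tight case, the computation \( |S| + |S_0| \leq |S_0| + |\mathcal{G}_1(S) \setminus T_0| \) correctly shows both induced subproblems inherit Hall's condition). Note that the paper itself gives no proof of Theorem~\ref{PhiliplHall}, deferring instead to \cite{Lovasz}, so there is no in-paper argument to compare against; your proof is the standard one found in such references.
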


The other result, attributed to Marshall Hall, is useful for establishing a lower bound on the number of matchings.

\begin{theorem}[M.\ Hall]  \label{MarshallHall}
Let \( A \) and \( B \) be nonempty finite sets such that \( |A| = |B| \), let \( \mathcal{G} \) be a subset of \( A \times B \), and let \( n \) be a positive integer. Assume that there exists at least one matching of \( \mathcal{G} \), and that for each \( b \in B \), we have \( d_2(b) \geq n \). Then there are at least \( n! \) matchings of \( \mathcal{G} \).
\end{theorem}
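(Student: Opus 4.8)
My plan is to prove the bound by induction on the common cardinality $m = |A| = |B|$, reformulating a matching of $\mathcal{G}$ as a choice of distinct preimages: a matching $f$ corresponds to its inverse $f^{-1}$, which selects for each $b \in B$ a distinct element $f^{-1}(b) \in \mathcal{G}_2(\{b\})$, so counting matchings is the same as counting systems of distinct representatives for the family $\bigl(\mathcal{G}_2(\{b\})\bigr)_{b \in B}$. Each member of this family has size $d_2(b) \geq n$, and since it is a subset of $A$ we automatically get $n \leq m$. Because $|A| = |B|$, applying Theorem~\ref{PhiliplHall} to the transposed relation $\{(b,a) : (a,b) \in \mathcal{G}\}$ shows that a matching exists if and only if the dual condition $|T| \leq |\mathcal{G}_2(T)|$ holds for every nonempty $T \subseteq B$; I will use this dual form throughout. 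The base case $m = 1$ is immediate: then $n = 1$ and there is exactly $1 = 1!$ matching.

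\textbf{Case A.} Suppose $|\mathcal{G}_2(T)| > |T|$ for every nonempty proper $T \subsetneq B$. Fix any $b_0 \in B$; it has at least $d_2(b_0) \geq n$ admissible preimages $a_0 \in \mathcal{G}_2(\{b_0\})$. For each such $a_0$, I restrict $\mathcal{G}$ to $(A \setminus \{a_0\}) \times (B \setminus \{b_0\})$. The strict inequality guarantees that deleting $a_0$ preserves the dual Hall condition, so the restricted relation has a matching; moreover every remaining $b$ loses at most one admissible element, so its degree is still $\geq n-1$. By the inductive hypothesis the restriction has at least $(n-1)!$ matchings, and since distinct choices of $a_0$ yield matchings that differ at $b_0$, I obtain at least $n \cdot (n-1)! = n!$ matchings.

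\textbf{Case B.} Otherwise there is a nonempty proper $T \subsetneq B$ with $U := \mathcal{G}_2(T)$ satisfying $|U| = |T|$. Every matching must pair $T$ bijectively with $U$ and $B \setminus T$ bijectively with $A \setminus U$, so the number of matchings factors as $N_1 \cdot N_2$, where $N_1$ counts matchings of the restriction to $U \times T$ and $N_2$ counts matchings of the restriction to $(A \setminus U) \times (B \setminus T)$. For $N_1$ I apply induction: $|T| < m$, the restriction inherits a matching and the dual Hall condition, and each $b \in T$ keeps full degree $d_2(b) \geq n$ since $\mathcal{G}_2(\{b\}) \subseteq U$; hence $N_1 \geq n!$. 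For $N_2$ it suffices that a matching exists, which follows because, for nonempty $T' \subseteq B \setminus T$, the identity $\mathcal{G}_2(T \cup T') = U \cup \mathcal{G}_2(T')$ together with the global dual condition gives $|\mathcal{G}_2(T') \setminus U| \geq |T'|$; thus $N_2 \geq 1$ and $N_1 N_2 \geq n!$.

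The delicate points, where I expect the most care to be needed, are precisely the verifications that each reduced relation still satisfies the dual Hall condition so that Theorem~\ref{PhiliplHall} can be reapplied — the strict-inequality bookkeeping in Case A and the tight-set factorization in Case B — together with confirming that the degree bounds ($n-1$ after a single deletion, and the full $n$ inside a tight set) correctly match the factorials being produced.
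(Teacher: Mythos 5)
Your argument is correct. Note that the paper itself gives no proof of this theorem: it is quoted as a known result, with proofs deferred to the Lov\'asz--Plummer reference, so there is no internal argument to compare against. What you have written is essentially the classical induction of M.~Hall, organized around the tight-set dichotomy, and all the delicate points you flagged do check out: the reduction of matchings to systems of distinct representatives for \( \bigl(\mathcal{G}_2(\{b\})\bigr)_{b \in B} \) is legitimate because \( |A| = |B| \) makes injective choices bijective, so Theorem~\ref{PhiliplHall} applied to the transposed relation yields exactly your dual Hall condition; in Case~A, every nonempty \( T \subseteq B \setminus \{b_0\} \) is proper in \( B \), so the strict inequality survives the deletion of a single \( a_0 \), and distinct choices of \( a_0 \) indeed produce matchings differing at \( b_0 \), giving the factor \( n \); in Case~B, the identity \( \mathcal{G}_2(T \cup T') = U \cup \mathcal{G}_2(T') \) correctly transfers the dual condition to the complementary restriction, and each \( b \in T \) retains its full degree because \( \mathcal{G}_2(\{b\}) \subseteq U \), so the factorization \( N_1 N_2 \geq n! \cdot 1 \) is sound. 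The one formal wrinkle is Case~A when \( n = 1 \): you invoke the inductive hypothesis with parameter \( n - 1 = 0 \), which falls outside the statement's hypothesis that \( n \) be a positive integer. This is cosmetic rather than a gap --- for \( n = 1 \) the theorem asserts only the existence of one matching, which is assumed; alternatively, in Case~A you may read \( (n-1)! = 0! = 1 \) and note that the restricted relation satisfies the dual Hall condition and hence has at least one matching. With that convention made explicit, the proof is complete and self-contained.
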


We are interested in a certain group-theoretic context for the sets \( A \), \( B \), and \( \mathcal{G} \). Let \( G \) be an abelian group (with operation written multiplicatively) and let \( A \) and \( B \) be nonempty finite subsets of \( G \) such that \( |A| = |B| \). Define \( \mathcal{G} \) by 
\[  \mathcal{G} = \{ (a,b) \in A \times B : ab \notin A \}. \]
In this paper, we will always assume that \( \mathcal{G} \) is as above. A matching of \( \mathcal{G} \) is then a bijection \( f : A \longrightarrow B \) satisfying \( af(a) \notin A \) for all \( a \in A \). We will usually not mention \( \mathcal{G} \) explicitly and instead refer to such an \( f \) as a ``matching from \( A \) to \( B \)."

\begin{example}
Let \( G \) be a cyclic group of order \( 6 \), and let \( x \) be a generator. Take \( A = \{ 1, x^2, x^4, x^5 \} \) and \( B = \{ x, x^2, x^3, x^4 \} \).  Then there is no matching from \( A \) to \( B \), since, for the subset \( S = \{ 1, x^2, x^4 \} \) of \( A \), we have \( \mathcal{G}_1(S) = \{ x, x^3 \} \), so that the condition in Theorem~\ref{PhiliplHall} is violated.  If instead we take \( B \) to be the set \( \{ x, x^2, x^3, x^5 \} \), then there are exactly two matchings from \( A \) to \( B \); one is given by \( 1 \mapsto x \), \( x^2 \mapsto x^5 \), \( x^4 \mapsto x^3 \), \( x^5 \mapsto x^2 \), and the other is the mapping \( 1 \mapsto x^3 \), \( x^2 \mapsto x \), \( x^4 \mapsto x^5 \), \( x^5 \mapsto x^2 \).
\end{example}

The number of matchings (possibly \( 0 \), as we just saw above) turns out to be related to the arithmetic structures of \( A \) and \( B \), as well as the algebraic structure of \( G \).  

It will be convenient to have a version of Theorem~\ref{PhiliplHall} which is tailored to our group-theoretic setup. Given a subset \( S \) of \( A \), let 
\[ U = \{ b \in B : Sb \subseteq A \}. \] 
Then \( B \setminus U = \mathcal{G}_1(S) \).  Therefore, we have the following:

\begin{corollary}  \label{RevisedPhilipHall}
Let \( G \) be an abelian group, and let \( A \) and \( B \) be nonempty finite subsets of \( G \) such that \( |A| = |B| \).  Then there exists a matching from \( A \) to \( B \) if and only if for every nonempty subset \( S \) of \( A \), we have \( |S| \leq |B \setminus U| \), where \( U = \{ b \in B : Sb \subseteq A \} \).
\end{corollary}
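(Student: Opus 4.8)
The plan is to deduce this directly from P.\ Hall's marriage theorem (Theorem~\ref{PhiliplHall}), so the only real content is to reconcile the Hall condition \( |S| \le |\mathcal{G}_1(S)| \) with the stated condition \( |S| \le |B \setminus U| \). To do this, I would first pin down the set identity \( B \setminus U = \mathcal{G}_1(S) \) that was flagged just before the statement, and then invoke Theorem~\ref{PhiliplHall} verbatim.

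For the identity, I would argue by complementary membership. Fix a nonempty \( S \subseteq A \) and take \( b \in B \). By definition \( b \in U \) holds precisely when \( Sb \subseteq A \), i.e.\ when \( ab \in A \) for every \( a \in S \). Negating, \( b \in B \setminus U \) holds precisely when there exists some \( a \in S \) with \( ab \notin A \); since \( \mathcal{G} = \{(a,b) : ab \notin A\} \), this says exactly that \( (a,b) \in \mathcal{G} \) for some \( a \in S \), which is the defining condition for \( b \in \mathcal{G}_1(S) \). Hence \( B \setminus U = \mathcal{G}_1(S) \), and in particular \( |\mathcal{G}_1(S)| = |B \setminus U| \).

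With this in hand, the equivalence transfers with no further work: a matching from \( A \) to \( B \) is by definition a matching of \( \mathcal{G} \), so Theorem~\ref{PhiliplHall} guarantees that such a matching exists if and only if \( |S| \le |\mathcal{G}_1(S)| \) for every nonempty \( S \subseteq A \), and substituting the identity turns this into \( |S| \le |B \setminus U| \). I do not anticipate any genuine obstacle here; the one point that warrants a moment's care is simply confirming that the specific \( \mathcal{G} \) fixed in the group setting is the same \( \mathcal{G} \) to which the abstract Hall theorem is being applied, together with the observation that the hypothesis \( |A| = |B| \) supplies exactly the cardinality condition Theorem~\ref{PhiliplHall} requires.
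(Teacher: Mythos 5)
Your proposal is correct and follows exactly the paper's route: the paper justifies the corollary by the same identity \( B \setminus U = \mathcal{G}_1(S) \) (stated just before the corollary) and then applies Theorem~\ref{PhiliplHall}. You merely spell out the complementary-membership verification of that identity, which the paper leaves implicit.
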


We conclude with the simple observation that a necessary condition for the existence of a matching from \( A \) to \( B \) is \( 1 \notin B \).  For \( A \) and \( B \) contained in certain abelian groups \( G \), this condition is also sufficient (see Corollary~\ref{MatchingPropertyGroup} for a precise statement).

\bigskip

\subsection{Preliminaries on matchings (linear setting)}\label{linear sec}
For any positive integer \( n \), we use \( [n] \) to denote the set \( \{1, \ldots, n\} \). Given a subset \( S \) of a vector space \( V \), we write \( \langle S \rangle \) for the subspace of \( V \) spanned by \( S \).  If \( S = \{ x_1, \ldots , x_n \} \), we may also denote this subspace by \( \langle x_1,  \ldots , x_n \rangle \).

The notion of matching two subspaces in a field extension, as described below, was introduced by Eliahou and Lecouvey in~\cite{Eliahou 2}.

Let \( K \subseteq L \) be a field extension, and let \( A \) and \( B \) be two \( n \)-dimensional \( K \)-subspaces of \( L \), with \( n > 0 \). An ordered basis \( \mathcal{A} = \{a_1, \ldots, a_n\} \) of \( A \) is said to be \emph{matched} to an ordered basis \( \mathcal{B} = \{b_1, \ldots, b_n\} \) of \( B \) if
\[
a_i^{-1} A \cap B \subseteq \langle b_1, \ldots, b_{i-1}, b_{i+1}, \ldots, b_n \rangle \quad \text{for each } i \in [n].
\]
We say that \( A \) is \emph{matched} to \( B \) (or is \emph{matchable} to \( B \)) if every ordered basis of \( A \) can be matched to some ordered basis of \( B \).

Note that if the above condition holds, then \( a_i b_i \notin \mathcal{A} \) for all \( i \), so the map \( a_i \mapsto b_i \) defines a matching, in the group-theoretic sense, from \( \mathcal{A} \) to \( \mathcal{B} \) in the multiplicative group \( L^\times \).

\begin{remark}\label{Necessary for Matching}
   A necessary condition for \( A \) to be matched to \( B \) is that \( 1 \notin B \). This is discussed in detail in \cite{Eliahou 2}; however, to keep the presentation here as self-contained as possible, we repeat their argument.
   
   Assume that \( A \) is matched to \( B \), and suppose, for the sake of contradiction, that \( 1 \in B \). Let \( \mathcal{A} = \{a_1, \dots, a_n\} \) be a basis of \( A \). Then \( \mathcal{A} \) is matched with a basis \( \mathcal{B} = \{b_1, \dots, b_n\} \). By definition, we have
\[
a_i^{-1}A \cap B \subseteq \langle b_1, \dots, b_{i-1}, b_{i+1}, \dots, b_n \rangle,
\]
for each \( i \in [n] \). This implies
\[
1 \in \bigcap_{i \in [n]} \left(a_i^{-1}A \cap B\right) \subseteq \bigcap_{i \in [n]} \langle b_1, \dots, b_{i-1}, b_{i+1}, \dots, b_n \rangle = \{0\},
\]
which is a contradiction.
\end{remark}

A field extension \( K \subseteq L \) is said to have the \emph{linear matching property} if for every pair of finite-dimensional \( K \)-subspaces \( A \) and \( B \) of \( L \) with \( \dim A = \dim B > 0 \) and \( 1 \notin B \), the subspace \( A \) is matched to \( B \).

\medskip

In the second part of the paper, where we examine matchings in the linear setting, we make use of an analogue of P.\ Hall's marriage theorem (Theorem~\ref{PhiliplHall}), expressed in the language of systems of distinct representatives.

Let \( V \) be a finite-dimensional vector space over a field \( K \), and let \( \mathcal{W} = \{ W_i \}_{i \in [n]} \) be a family of subspaces of \( V \). It is not assumed that the \( W_i \) are distinct.  A \emph{free transversal} for \( \mathcal{W} \) is a linearly independent set of vectors \( \{x_1, \ldots, x_n\} \subseteq V \) such that \( x_i \in W_i \) for each \( i \in [n] \).

A fundamental result of Rado~\cite{Rado} provides a necessary and sufficient condition for the existence of a free transversal, closely resembling the condition in P.\ Hall’s classical marriage theorem.

\begin{theorem}[Rado]\label{Linear Hall}
Let \( V \) be a finite-dimensional vector space over a field \( K \), and let \( \mathcal{W} =  \{ W_i \}_{i \in [n]} \) be a family of subspaces of \( V \). Then \( \mathcal{W} \) admits a free transversal if and only if
\[
\dim\left( \sum_{i \in J} W_i \right) \geq |J| \quad \text{for all } J \subseteq [n].
\]
\end{theorem}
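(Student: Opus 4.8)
The plan is to establish necessity directly and sufficiency by induction. For necessity, suppose $\{x_1, \ldots, x_n\}$ is a free transversal for $\mathcal{W}$. Given any $J \subseteq [n]$, the set $\{x_i : i \in J\}$ is linearly independent and contained in $\sum_{i \in J} W_i$, so $\dim\left(\sum_{i \in J} W_i\right) \geq |J|$, as required.

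For sufficiency, I would induct on the integer $\sum_{i \in [n]} \dim W_i$, assuming the stated dimension condition throughout. Applying the condition to each singleton $J = \{i\}$ gives $\dim W_i \geq 1$ for every $i$, so this sum is at least $n$, with equality precisely when every $W_i$ is a line $\langle x_i \rangle$. That is the base case: here the condition with $J = [n]$ reads $\dim\langle x_1, \ldots, x_n \rangle \geq n$, which forces $\{x_1, \ldots, x_n\}$ to be linearly independent, hence a free transversal. For the inductive step, suppose some subspace, say $W_n$, satisfies $\dim W_n \geq 2$. The goal is to locate a hyperplane $H \subsetneq W_n$ (a codimension-one subspace) such that the family $\{W_1, \ldots, W_{n-1}, H\}$ still satisfies the dimension condition. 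Replacing $W_n$ by $H$ strictly decreases $\sum_i \dim W_i$, so the inductive hypothesis would then produce a free transversal $\{x_1, \ldots, x_n\}$ with $x_n \in H \subseteq W_n$, which is also a free transversal for the original family.

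Writing $U_J = \sum_{i \in J} W_i$ and calling a set $J$ \emph{tight} when $\dim U_J = |J|$, I would first observe that replacing $W_n$ by a hyperplane $H$ leaves the condition intact on every set not containing $n$, and can fail on a set $J \cup \{n\}$ with $J \subseteq [n-1]$ only when $J$ is tight and $H \subseteq U_J$. Thus the ``bad'' hyperplanes of $W_n$ are exactly those contained in $U_J$ for some tight $J \subseteq [n-1]$, and the task reduces to showing that not every hyperplane is bad.

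The crux, and the step I expect to be the main obstacle, is a submodularity argument showing that tight sets are closed under union. Concretely, if $J_1, J_2 \subseteq [n-1]$ are tight, then from $\dim(U_{J_1} + U_{J_2}) + \dim(U_{J_1} \cap U_{J_2}) = \dim U_{J_1} + \dim U_{J_2}$, together with $U_{J_1 \cup J_2} = U_{J_1} + U_{J_2}$, the inclusion $U_{J_1 \cap J_2} \subseteq U_{J_1} \cap U_{J_2}$, and the dimension condition applied to $J_1 \cup J_2$ and $J_1 \cap J_2$, one forces equality throughout, so both $J_1 \cup J_2$ and $J_1 \cap J_2$ are tight. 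It follows that the union $J^*$ of all tight subsets of $[n-1]$ is itself tight, and $U_J \subseteq U_{J^*}$ for every tight $J$. If every hyperplane of $W_n$ were bad, then every hyperplane of $W_n$ would lie in $U_{J^*}$; but since $\dim W_n \geq 2$ the hyperplanes of $W_n$ cover $W_n$, so this would force $W_n \subseteq U_{J^*}$, whence $\dim U_{J^* \cup \{n\}} = \dim U_{J^*} = |J^*|$, contradicting the dimension condition for $J^* \cup \{n\}$. Therefore a good hyperplane exists, and the induction goes through.
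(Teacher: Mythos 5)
The paper offers no proof of Theorem~\ref{Linear Hall} to compare against: it is quoted as background, with the proof deferred to Rado's original article \cite{Rado}, so your argument has to stand on its own --- and it does. Necessity is immediate, as you say. For sufficiency, the induction on \( \sum_{i \in [n]} \dim W_i \) is sound: the base case (all \( W_i \) lines) is handled correctly by the condition at \( J = [n] \); your characterization of the bad hyperplanes is right, since with \( U_J = \sum_{i \in J} W_i \) one has \( \dim(U_J + H) \geq \dim U_J \geq |J| \), so a failure at \( J \cup \{n\} \) forces \( \dim U_J = |J| \) and \( H \subseteq U_J \) (and the case \( J = \emptyset \) is harmless because \( \dim H = \dim W_n - 1 \geq 1 \)); and the submodularity pinch is carried out correctly, using that \( U_{J_1 \cup J_2} = U_{J_1} + U_{J_2} \) exactly while \( U_{J_1 \cap J_2} \subseteq U_{J_1} \cap U_{J_2} \), with the Hall-type condition at \( J_1 \cup J_2 \) and \( J_1 \cap J_2 \) forcing equality, so tight sets are indeed closed under union and \( J^* \) is tight. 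The concluding step is also fine: when \( \dim W_n \geq 2 \), every vector of \( W_n \) lies in some hyperplane of \( W_n \), so if all hyperplanes were bad then \( W_n \subseteq U_{J^*} \), giving \( \dim U_{J^* \cup \{n\}} = |J^*| \) and contradicting the hypothesis at \( J^* \cup \{n\} \). What you have written is essentially the classical ``critical set'' proof of Rado's theorem --- the linear-algebra instance of the standard matroid argument, parallel to the usual tight-set proof of P.~Hall's theorem --- so while the step you flagged as the crux (closure of tight sets under union) is genuinely where the content lies, your treatment of it is complete and correct.
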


Given a field extension \( K \subseteq L \) and \( K \)-subspaces \( A \) and \( B \) of \( L \), we use \( AB \) to denote the \emph{Minkowski product} of \( A \) and \( B \):
\[
AB = \{ ab : a \in A,\ b \in B \}.
\]

\medskip

By combining Rado's theorem with linear analogues of two theorems from additive number theory, Eliahou and Lecouvey~\cite{Eliahou 2} established the following fundamental results:
\begin{itemize}
    \item A subspace \( A \) is matched to itself if and only if \( 1 \notin A \).
    \item A field extension \( K \subseteq L \) has the linear matching property if and only if \( L \) contains no nontrivial proper finite-dimensional extension over \( K \).
\end{itemize}

The main objective of our work in the linear setting is to develop an efficient and unified framework for characterizing pairs of matchable subspaces, one that not only recovers known results but also provides a definitive perspective on the underlying structure of the pairs.

\section{Matchings in abelian groups} \label{MIBG}

Let \( G \) be an abelian group and let \( S \) be a subset of \( G \).  In this section, the notation \( \langle S \rangle \) is used for the subgroup of \( G \) generated by \( S \). If \( S = \{ x \} \), then we also write \( \langle x \rangle \) for this subgroup. We use \( o(x) \) for the order of any \( x \in G \), with the understanding that \( o(x) = \infty \) if \( x \) does not have finite order.  

The following is the first main result of this paper.  Its proof will rely on Dyson's \( e \)-transform, which is discussed in Chapter 2 of \cite{Nathanson}.
Note, however, that our particular use of the \( e \)-transform will not require any background knowledge concerning its properties.

\begin{theorem}  \label{CharForAbelianGroups}
Let \( A \) and \( B \) be nonempty finite subsets of an abelian group \( G \) such that \( |A| = |B| \) and \( 1 \notin B \).  Then there exists a matching from \( A \) to \( B \) if and only if for every pair of nonempty subsets \( S \subseteq A \) and \( R \subseteq B \cup \{ 1 \} \) such that \( SR = S \), we have \( |S| \leq |B \setminus R| \).
\end{theorem}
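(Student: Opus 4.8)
My plan is to deduce everything from the group-theoretic form of Hall's theorem recorded in Corollary~\ref{RevisedPhilipHall}, using the abbreviation $U_S = \{ b \in B : Sb \subseteq A \}$, so that $\mathcal{G}_1(S) = B \setminus U_S$ and the corollary reads: a matching exists if and only if $|S| \leq |B \setminus U_S|$, equivalently $|S| + |U_S| \leq |B|$, for every nonempty $S \subseteq A$. The guiding observation is that for nonempty $R \subseteq B \cup \{1\}$ the relation $SR = S$ holds precisely when every element of $R$ stabilizes $S$; thus the condition in the theorem says that $|S| + |B \cap \mathrm{Stab}(S)| \leq |B|$ for all nonempty $S \subseteq A$ (the binding choice being $R = (B \cap \mathrm{Stab}(S)) \cup \{1\}$), and the whole point of the proof is to compare the full set $U_S$ with the subgroup-intersection $B \cap \mathrm{Stab}(S)$.

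For necessity, suppose a matching exists and let $(S,R)$ be a pair as in the statement. Since every $b \in R \cap B$ satisfies $Sb \subseteq SR = S \subseteq A$, we get $R \cap B \subseteq U_S$, hence $B \setminus U_S \subseteq B \setminus R$. Corollary~\ref{RevisedPhilipHall} then yields $|S| \leq |B \setminus U_S| \leq |B \setminus R|$, as required. This direction needs nothing beyond Hall's theorem.

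For sufficiency I argue by contraposition: assuming no matching exists, I will manufacture a pair $(S,R)$ violating the displayed inequality. By Corollary~\ref{RevisedPhilipHall} there is a nonempty $S \subseteq A$ with $|S| + |U_S| > |B|$; writing $U = U_S$, note that $SU \subseteq A$ and that, since $|S| \leq |A| = |B|$, the set $U$ is nonempty. I now run Dyson's $e$-transform to force $U$ to stabilize $S$. Maintaining the invariant that $S \subseteq A$ and $U \subseteq B$ are nonempty with $SU \subseteq A$ and $|S| + |U| > |B|$, as long as $SU \not\subseteq S$ I pick $s_0 \in S$ and $u_0 \in U$ with $s_0 u_0 \notin S$ and replace $(S,U)$ by the $s_0$-transform $(S \cup s_0 U,\ U \cap s_0^{-1} S)$. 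Choosing the parameter $e = s_0$ inside $S$ is exactly what keeps the new first set within $A$, because $s_0 U \subseteq SU \subseteq A$; the elementary transform identities give that the sum of sizes is unchanged, that $(S \cup s_0 U)(U \cap s_0^{-1}S) \subseteq SU \subseteq A$, and that $s_0 u_0$ enlarges $S$ strictly. Since $|S|$ is bounded by $|A|$, the process halts, necessarily with $SU = S$. Taking $R = U \cup \{1\}$ then gives $SR = S$, $R \subseteq B \cup \{1\}$ nonempty, and $|B \setminus R| = |B| - |U| < |S|$, the desired violation.

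The main obstacle is this sufficiency step, and specifically the bookkeeping that makes the $e$-transform usable: one must take the transform parameter inside $S$ so that the enlarged set stays within $A$, verify that $SU \subseteq A$ and the excess $|S| + |U| > |B|$ are genuinely preserved (so that $U$ can never shrink to the empty set), and confirm strict growth of $S$ at each step to guarantee termination. Once these are in place the argument uses only the definition of the transform, in line with the remark preceding the theorem that no structural facts about the $e$-transform are needed. A secondary point to get right is the translation of $SR = S$ into the stabilization statement, which is what lets the terminal configuration be read back as a legitimate pair $(S,R)$ in the theorem's condition.
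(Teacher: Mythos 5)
Your proposal is correct and follows essentially the same route as the paper: necessity via Corollary~\ref{RevisedPhilipHall} exactly as in the text, and sufficiency via an iterated Dyson $e$-transform with the transform parameter taken inside $S$, the size-preservation identity, and strict growth of $S$ forcing termination. The only differences are organizational rather than substantive — you argue sufficiency by contraposition and keep the second set nonempty through the excess invariant $|S|+|U|>|B|$, whereas the paper transforms the pair $(S,\,U\cup\{1\})$ directly so that $1$ stays in the second set throughout, then applies the hypothesis to the terminal pair and unwinds $|S_m|+|R_m|=|S|+|R|$ to recover $|S|\leq|B\setminus U|$.
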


\begin{proof}
Assume that there is a matching from \( A \) to \( B \).  Suppose \( S \) and \( R \) are nonempty sets satisfying \( S \subseteq A \), \( R \subseteq B \cup \{ 1 \} \), and \( SR = S \). We will show that \( |S| \leq |B \setminus R|\).

Let \( U = \{ b \in B : Sb \subseteq A \} \). Since there is a matching from \( A \) to \( B \), it follows from Corollary~\ref{RevisedPhilipHall} that
\[ |S| \leq |B \setminus U|. \]  
Observe that \( R \) is a subset of \( U \cup \{ 1 \} \), since \( R \subseteq B \cup \{ 1 \} \) and \( SR = S \subseteq A \). Hence 
\[ B \setminus (U \cup \{ 1 \}) \subseteq B \setminus R. \]
We have \( 1 \notin B \), so this inclusion can be simplified to \( B \setminus U \subseteq B \setminus R, \) which implies
\[ |B \setminus U| \leq |B \setminus R|. \]
Combining our inequalities gives \( |S| \leq |B \setminus R| \), as desired.

Assume, conversely, that the condition in the statement involving \( S \) and \( R \) holds. We will show that there is a matching from \( A \) to \( B \) by verifying that the condition in Corollary~\ref{RevisedPhilipHall} is satisfied.

Let \( S \) be a nonempty subset of \( A \). As above, define \( U = \{ b \in B : Sb \subseteq A \}. \) 
We will show that \( |S| \leq |B \setminus U| \). Let \( R = U \cup \{1\} \). We consider two cases.

\medskip

\textbf{Case 1:} \( SR = S \). We can apply our hypothesis to \( S \) and \( R \), to obtain
\[
|S| \leq |B \setminus R|.
\]
Since \( 1 \notin B \), we have \( B \setminus R = B \setminus U \), and so
\[
|S| \leq |B \setminus U|,
\]
as desired.

\medskip

{\bf Case 2:} \( SR \neq S\).  We will employ Dyson's \( e \)-transform.  Let \( e \in S \) and \( r \in R \) be such that \( er \notin S \).  Define sets \( S_1 \) and \( R_1 \) by
\begin{align*}
 S_1 &= S \cup (eR), \\
R_1 &= R \cap (Se^{-1}).
\end{align*}
We claim that the following conditions hold:
\begin{itemize}
\item[(i)] \( S_1R_1 \subseteq SR \subseteq A \),
\item[(ii)] \( |S_1| + |R_1| = |S| + |R| \),
\item[(iii)] \( 1 \in R_1 \subseteq R \subseteq B \cup \{ 1 \}\),
\item[(iv)] \( S_1 \subseteq A \mbox{ and }|S| < |S_1|. \)
\end{itemize}
Conditions (i) and (iii) follow directly from the definitions of \( S \), \( R \), \( S_1 \), and \( R_1 \). Regarding (iv), we have \( S_1 \subseteq A \) on account of (i) and (iii) (specifically, \( S_1R_1 \subseteq A \) and \( 1 \in R_1 \)).  The rest of (iv) follows from the fact that \( S \subseteq S_1 \) and \( er \in S_1 \setminus S \).  Finally, to see that (ii) holds, observe that 
\begin{align*}
|S_1| &= |S \cup (eR)| \\
&= |S| + |eR| - |S \cap (eR)| \\
&= |S| + |R| - |S \cap (eR)|,
\end{align*}
and the map 
\[ R_1 \longrightarrow S \cap (eR) \]
\[ x \mapsto xe \]
is a bijection. 

If \( S_1R_1 \neq S_1 \), we repeat the above, replacing \( S \) with \( S_1 \) and \( R \) with \( R_1 \).  The process continues until we reach nonempty sets \( S_m \) and \( R_m \) satisfying \( S_mR_m = S_m \). This must eventually occur because the sets \( S_1, S_2, \ldots \) are strictly increasing in size and are contained in the finite set \( A \).  Note that the sets \( S_m \) and \( R_m \) must satisfy

\begin{itemize}
\item[(v)] \( S_mR_m = S_m \subseteq A \),
\item[(vi)] \( |S_m| + |R_m| = |S| + |R| \),
\item[(vii)] \( 1 \in R_m \subseteq R \subseteq B \cup \{ 1 \}\).
\end{itemize}
Applying our hypothesis to \( S_m, R_m \) (note that (v) and (vii) ensure that \( S_m , R_m \) can play the roles of \( S , R \)), we get
\[ |S_m| \leq |B \setminus R_m|.\]
Since \( 1 \notin B \), we can rewrite this inequality as 
\[ |S_m| \leq |B \setminus (R_m \setminus \{ 1 \})|.\]
We have \( 1 \in R_m \) and \( R_m \setminus \{ 1 \} \subseteq R \setminus \{ 1 \} = U \subseteq B\), hence 
\begin{align*}
|S_m| &\leq |B| - |R_m \setminus \{ 1 \}| \\
& = |B| - |R_m| + 1.
\end{align*}
Finally, using (vi) and bearing in mind that \( |R| - 1 = |U| \), we obtain
\[ |S| \leq |B| - |U| = |B \setminus U|, \] 
so that the condition in Corollary~\ref{RevisedPhilipHall} holds.
\end{proof}

The lemma below recalls a known interpretation of the condition \( SR = S \) in Theorem~\ref{CharForAbelianGroups} in terms of cosets. It will be used frequently in the applications.

\begin{lemma} \label{UnionOfCosets}
Let \( G \) be an abelian group and let \( S \) and \( R \) be nonempty finite subsets of \( G \). Then \( SR = S \) if and only if \( S \) is a union of cosets of \( \langle R \rangle \). 
\end{lemma}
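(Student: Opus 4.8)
The plan is to reduce both directions to the single-element translation identity $Sr = S$ for $r \in R$, which is available precisely because $S$ is finite, and then to propagate it across products using commutativity. Set $H := \langle R \rangle$.

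First I would handle the forward direction. Assuming $SR = S$, fix any $r \in R$; then $Sr \subseteq SR = S$. Since multiplication by $r$ is a bijection of $G$, the translate $Sr$ has the same cardinality as the finite set $S$, so the inclusion $Sr \subseteq S$ forces $Sr = S$, and hence also $Sr^{-1} = S$. Because $G$ is abelian, every element of $H$ is a finite product of elements of $R$ and their inverses, so iterating the identities $Sr = S$ and $Sr^{-1} = S$ over the factors yields $Sh = S$ for every $h \in H$. In particular $sh \in S$ for all $s \in S$ and $h \in H$, i.e. $sH \subseteq S$; therefore $S = \bigcup_{s \in S} sH$ exhibits $S$ as a union of cosets of $H$.

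For the converse, suppose $S$ is a union of cosets of $H$, so that $sH \subseteq S$ for each $s \in S$. Since $R \subseteq H$, we get $sR \subseteq sH \subseteq S$ for every $s$, whence $SR \subseteq S$. For the reverse inclusion I would fix some $r_0 \in R$ (possible since $R$ is nonempty): for $s \in S$ we have $r_0^{-1} \in H$, so $sr_0^{-1} \in sH \subseteq S$, and thus $s = (sr_0^{-1})r_0 \in SR$. This gives $S \subseteq SR$, and combining the two inclusions yields $SR = S$.

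The only subtle point — and the step I would flag as the main obstacle — is the passage from $Sr \subseteq S$ to $Sr = S$ in the forward direction, which is exactly where finiteness of $S$ is indispensable: without it a translate can be a proper subset (for instance $S = \{x, x^2, \ldots\}$ and $r = x$ in an infinite cyclic group give $Sr \subsetneq S$). Everything else is routine bookkeeping of inverses, made painless by commutativity.
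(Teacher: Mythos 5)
Your proof is correct and takes essentially the same approach as the paper's: in both, the forward direction uses the finiteness of \( S \) to gain access to inverses and then propagates stability along words in \( R \) to conclude \( s\langle R \rangle \subseteq S \), while the converse is routine. The only difference is cosmetic: you recover inverses at the set level (\( Sr \subseteq S \) together with \( |Sr| = |S| \) forces \( Sr = S \), hence \( Sr^{-1} = S \)), whereas the paper argues element-wise that each \( x \in R \) must have finite order because the points \( ax, ax^2, \ldots \) repeat inside the finite set \( S \), so \( ax^{-1} = ax^{o(x)-1} \in S \) --- and, as a minor aside, neither mechanism actually uses commutativity, since \( \{ g \in G : Sg = S \} \) is a subgroup of any group.
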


\begin{proof}
Assume that \( SR = S \).  Let \( a \in S \).  For any \( x \in R \), we have \( ax \in SR = S \) and by induction  \( ax^k \in S \) for all positive integers \( k \). Since \( S \) is finite, it follows that \( o(x) < \infty \).  Thus \( \langle x \rangle = \{ x, x^2, \ldots ,x^{o(x)} \} \) and clearly \( a \langle x \rangle \subseteq S \).  Note, in particular, that \( a x^{-1} = a x^{o(x)-1} \in S \).

Now suppose \( x_1^{\epsilon_1}x_2^{\epsilon_2} \cdots x_n^{\epsilon_n} \) is a word on \( R \), with each \( \epsilon_i \) equaling \( \pm 1 \), and \( a' \in S \).  By the above paragraph, \( a'x_1^{\epsilon_1} \in S \), hence \( (a'x_1^{\epsilon_1})x_2^{\epsilon_2} \in S \), and so on, giving us \( a'x_1^{\epsilon_1}x_2^{\epsilon_2} \cdots x_n^{\epsilon_n} \in S \). Thus \( a' \langle R \rangle \subseteq S \).  From this we see that \( S \) is a union of cosets of \( \langle R \rangle \).

The converse is clear.
\end{proof}

Let \( G \) be an abelian group. A subset of \( G \) of the form \( \{ a, ax, \ldots , ax^{n-1} \} \), where \( a, x \in G \) and \( n \) is a positive integer such that \( n -1 < o(x) \), is called a {\em progression} of {\em length} \( n \). 

The following result is new and can be viewed as a generalization of Theorem 1.2-(6) in \cite{Aliabadi 2}.

\begin{corollary} \label{ProgAndOrder}
Let \( A \) and \( B \) be nonempty finite subsets of an abelian group \( G \) such that \( |A| = |B| \) and \( 1 \notin B \). Let \( n \) be a positive integer. Assume that \( A \) contains no progression of length greater than \( n \), and every element of \( B \) has order greater than \( n \).  Then there exists a matching from \( A \) to \( B \). 
\end{corollary}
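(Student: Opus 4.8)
The plan is to deduce the result directly from Theorem~\ref{CharForAbelianGroups}. Since $|A|=|B|$ and $1\notin B$, the theorem reduces the problem to verifying that for every pair of nonempty subsets $S\subseteq A$ and $R\subseteq B\cup\{1\}$ with $SR=S$, one has $|S|\le|B\setminus R|$. My strategy is to show that the hypotheses force every such $R$ to be the trivial set $\{1\}$, which collapses the required inequality to a triviality.

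First I would suppose, for contradiction, that some admissible pair $(S,R)$ has $R\neq\{1\}$, and fix an element $x\in R$ with $x\neq 1$. Since $R\subseteq B\cup\{1\}$, this forces $x\in B$, and hence $o(x)>n$ by hypothesis. By Lemma~\ref{UnionOfCosets}, the equality $SR=S$ says precisely that $S$ is a union of cosets of $\langle R\rangle$, so for any chosen $a\in S$ we get $a\langle R\rangle\subseteq S$. Because $S$ is finite, $\langle R\rangle$ must be finite; in particular $x$ has finite order $o(x)$.

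Next, since $x\in R$ we have $\langle x\rangle\subseteq\langle R\rangle$, whence $a\langle x\rangle\subseteq a\langle R\rangle\subseteq S\subseteq A$. But $a\langle x\rangle=\{a,ax,\ldots,ax^{o(x)-1}\}$ is a progression of length $o(x)>n$ sitting inside $A$, contradicting the assumption that $A$ contains no progression of length greater than $n$. Therefore no admissible pair with $R\neq\{1\}$ can exist, so every admissible $R$ equals $\{1\}$. For such $R$, the condition $1\notin B$ gives $B\setminus R=B$, so $|B\setminus R|=|B|=|A|\ge|S|$, and the criterion of Theorem~\ref{CharForAbelianGroups} is satisfied, yielding the desired matching.

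The only point requiring genuine care — and the main (if modest) obstacle — is confirming that the coset $a\langle x\rangle$ really realizes a progression of length exactly $o(x)$ inside $A$. This requires $o(x)$ to be finite, which I extract from the finiteness of $A$ via $a\langle R\rangle\subseteq S$, and it requires the $o(x)$ listed elements to be pairwise distinct, which holds because $x$ has order $o(x)$. With those two observations in place the contradiction is immediate, and the rest of the argument is purely formal.
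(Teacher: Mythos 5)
Your proof is correct and follows essentially the same route as the paper: both verify the criterion of Theorem~\ref{CharForAbelianGroups} by using Lemma~\ref{UnionOfCosets} to get \( a\langle R \rangle \subseteq S \subseteq A \) and then forcing \( R = \{1\} \) via a progression contradiction. The only cosmetic difference is that the paper exhibits the shorter progression \( \{ax, ax^2, \ldots, ax^{n+1}\} \) to conclude \( o(x) \leq n \) directly, whereas you use the full coset \( a\langle x \rangle \) (after correctly securing the finiteness of \( o(x) \) from the finiteness of \( S \)); both steps are valid.
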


\begin{proof}
Suppose \( S \) and \( R \) are nonempty sets such that \( S \subseteq A \), \( R \subseteq B \cup \{ 1 \} \), and \( SR = S \).  We claim that \( R = \{ 1 \} \).  Let \( x \in R \) and \( a \in S \). Then \( a \langle R \rangle \subseteq  S \) by Lemma~\ref{UnionOfCosets}. Also, we must have \( o(x) \leq n \); otherwise, the set \( \{ ax, ax^2, \ldots , ax^{n + 1} \} \) would be a progression of length greater than \( n \) contained in \( a \langle R \rangle \subseteq  S \subseteq A \). On the other hand, every element of \( B \) is assumed to have order greater than \( n \), so \( x \) cannot belong to  \( B \). Since \( R \subseteq B \cup \{ 1 \} \), we then must have \( x = 1 \). The claim is established.

It follows that \( |B \setminus R | = |B \setminus \{ 1 \}| = |B| = |A| \geq |S| \).  We now apply Theorem~\ref{CharForAbelianGroups} to complete the proof. \end{proof}

A {\em Chowla subset} of a (not necessarily abelian) group \( G \) is a nonempty subset \( S \) with the property that every element of \( S \) has order greater than \( |S| \). In \cite{Hamidoune}, Hamidoune used the isoperimetric method to prove that if \( A \) and \( B \) satisfy the usual conditions and, in addition, \( B \) is a Chowla subset of \( G \), then there is a matching from \( A \) to \( B \).  In the case where \( G \) is abelian, this result follows easily from our work above.

\begin{corollary}\label{Chowla set}
Let \( A \) and \( B \) be nonempty finite subsets of an abelian group \( G \) such that \( |A| = |B| \) and \( 1 \notin B \). If \( B \) is a Chowla subset of \( G \), then there exists a matching from \( A \) to \( B \).
\end{corollary}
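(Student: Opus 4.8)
The plan is to deduce this directly from Corollary~\ref{ProgAndOrder} by choosing the parameter \( n \) optimally. Set \( n = |A| = |B| \). The Chowla hypothesis on \( B \) says precisely that every element of \( B \) has order greater than \( |B| = n \), which is one of the two hypotheses of Corollary~\ref{ProgAndOrder}. It then remains only to verify the other hypothesis, namely that \( A \) contains no progression of length greater than \( n \).

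The key point is that a progression of length \( m \) is, by definition, a set \( \{ a, ax, \ldots, ax^{m-1} \} \) with \( m - 1 < o(x) \), and this order condition guarantees that its \( m \) listed elements are pairwise distinct. Hence a progression of length \( m \) is a subset of \( G \) with exactly \( m \) elements. Since \( |A| = n \), the set \( A \) cannot contain any subset of cardinality greater than \( n \), and in particular cannot contain a progression of length greater than \( n \). Both hypotheses of Corollary~\ref{ProgAndOrder} are therefore satisfied for this value of \( n \), and the corollary yields a matching from \( A \) to \( B \).

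There is no genuine obstacle here; the only point requiring care is the bookkeeping observation that a progression's length equals its cardinality, which is exactly what allows the single numerical threshold \( n = |A| = |B| \) to simultaneously activate both hypotheses of Corollary~\ref{ProgAndOrder}.

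Alternatively, one can argue directly from Theorem~\ref{CharForAbelianGroups} without passing through Corollary~\ref{ProgAndOrder}: given nonempty \( S \subseteq A \) and \( R \subseteq B \cup \{ 1 \} \) with \( SR = S \), Lemma~\ref{UnionOfCosets} shows that \( S \) is a union of cosets of \( \langle R \rangle \), so \( |\langle R \rangle| \) divides \( |S| \) and hence \( |\langle R \rangle| \leq |S| \leq |B| \). Then every \( x \in R \) satisfies \( o(x) = |\langle x \rangle| \leq |\langle R \rangle| \leq |B| \), so by the Chowla property no such \( x \) can lie in \( B \); since \( R \subseteq B \cup \{ 1 \} \), this forces \( R = \{ 1 \} \), giving \( |B \setminus R| = |B| \geq |S| \) and verifying the criterion of Theorem~\ref{CharForAbelianGroups}.
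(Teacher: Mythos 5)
Your proposal is correct and matches the paper's proof exactly: the paper's entire argument is ``Take \( n = |A| = |B| \) in Corollary~\ref{ProgAndOrder},'' and your careful check that a progression of length \( m \) has exactly \( m \) distinct elements (so \( A \) cannot contain one longer than \( |A| \)) is just the implicit bookkeeping made explicit. Your alternative argument via Theorem~\ref{CharForAbelianGroups} is also sound, but it is essentially an inlining of the proof of Corollary~\ref{ProgAndOrder} specialized to \( n = |B| \), so it does not constitute a genuinely different route.
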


\begin{proof}
Take \( n = |A| = |B| \) in Corollary~\ref{ProgAndOrder}.
\end{proof}

\begin{remark}
    We mention that Corollary~\ref{Chowla set} can be used to derive Corollary~3.6 in \cite{Aliabadi 1}. Let \( A \) and \( B \) be nonempty finite subsets of an abelian group \( G \) such that \( 1 \notin B \) and \( |A| = |B| = n < n(G) \), where \( n(G) \) denotes the smallest cardinality of a nontrivial subgroup of \( G \). In this situation, \( B \) is a Chowla subset, and thus, by Corollary~\ref{Chowla set}, there is a matching from \( A \) to \( B \).
\end{remark}

Next, we provide a short proof, using Theorem~\ref{CharForAbelianGroups}, of a result which first appeared in \cite{Losonczy 2}. We point out that, in the argument below, the verification of the condition in Theorem~\ref{CharForAbelianGroups} involving \( S \) and \( R \) is rather different from the one given for Corollary~\ref{ProgAndOrder}.

\begin{corollary}\label{sym abelian}
Let \( A \) be a nonempty finite subset of an abelian group \( G \) such that \( 1 \notin A \). Then there exists a matching from \( A \) to itself.
\end{corollary}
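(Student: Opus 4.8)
The plan is to apply Theorem~\ref{CharForAbelianGroups} with $B = A$. The two standing hypotheses are then automatic: $|A| = |B|$ holds trivially, and $1 \notin B$ because $1 \notin A$. It therefore suffices to verify the combinatorial condition of that theorem: for every pair of nonempty subsets $S \subseteq A$ and $R \subseteq A \cup \{1\}$ with $SR = S$, one has $|S| \leq |A \setminus R|$.

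First I would rewrite the target inequality in a convenient form. Since $R \subseteq A \cup \{1\}$ and $1 \notin A$, the set $R \setminus \{1\}$ is contained in $A$ and in fact equals $A \cap R$, so $|A \setminus R| = |A| - |R \setminus \{1\}|$. Hence $|S| \leq |A \setminus R|$ is equivalent to $|S| + |R \setminus \{1\}| \leq |A|$. As $S$ and $R \setminus \{1\}$ are both subsets of $A$, this follows immediately from additivity of cardinality, provided I can show that $S$ and $R \setminus \{1\}$ are disjoint.

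The heart of the argument, and the only step requiring any real input, is this disjointness claim. Suppose toward a contradiction that some $x \in R$ with $x \neq 1$ also lies in $S$. By Lemma~\ref{UnionOfCosets}, the identity $SR = S$ means that $S$ is a union of cosets of $\langle R \rangle$; since $x \in S$ and $x \in \langle R \rangle$, the entire coset $x\langle R \rangle = \langle R \rangle$ is contained in $S$. In particular $1 \in \langle R \rangle \subseteq S \subseteq A$, contradicting $1 \notin A$. (Alternatively, one can bypass the lemma: from $x \in S \cap R$ and $SR = S$ one gets $x^k \in S$ for every $k \geq 1$ by induction, and finiteness of $S$ forces $x$ to have finite order, whence $1 = x^{o(x)} \in S \subseteq A$, again a contradiction.) This establishes $S \cap (R \setminus \{1\}) = \emptyset$ and completes the verification of the condition in Theorem~\ref{CharForAbelianGroups}.

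I do not anticipate a genuine obstacle here: once $B$ is specialized to $A$, the statement is a clean consequence of the characterization theorem, and everything reduces to the disjointness observation above, whose sole ingredient is that $1$ cannot belong to $A$. The one point to watch is the bookkeeping around the element $1$, which may or may not lie in $R$ but never lies in $A$; retaining $R \setminus \{1\}$ rather than $R$ in the count is precisely what makes the cardinality comparison exact.
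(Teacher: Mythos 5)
Your proof is correct and follows essentially the same route as the paper's: both verify the condition of Theorem~\ref{CharForAbelianGroups} by showing, via Lemma~\ref{UnionOfCosets}, that $S$ and $R$ (away from $1$) are disjoint subsets of $A$, and then conclude by the same cardinality count $|S| + |R \setminus \{1\}| \leq |A|$. Your parenthetical alternative avoiding the lemma is a minor stylistic variant, not a different approach.
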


\begin{proof}
Suppose \( S \) and \( R \) are nonempty sets such that \( S \subseteq A \), \( R \subseteq A \cup \{ 1 \} \), and \( SR = S \).  Then \( S \) and \( R \) are disjoint.  To see this, assume the contrary and let \( a \in S \cap R \). Then \( \langle R \rangle = a \langle R \rangle \) and, by Lemma~\ref{UnionOfCosets}, \( a \langle R \rangle \subseteq S \).  Hence \( 1 \in S \subseteq A \), a contradiction.

By the above and the fact that \( S \cup (R \setminus \{ 1 \} ) \subseteq A \), we have \( |S| + |R \setminus \{ 1 \} | \leq |A| \), and so 
\[ |S| \leq |A| - |R \setminus \{ 1 \} | = |A \setminus (R \setminus \{ 1 \} )| = |A \setminus R|. \]  
Applying Theorem~\ref{CharForAbelianGroups} completes the proof.
\end{proof}

Let \( A \) be a finite subset of an abelian group \( G \).  We say that \( A \) is a {\em Sidon set} if every \( x \) in \( G \) can be written in at most one way as a product \( x = a_1a_2 \), with \( a_1, a_2 \in A \), up to a transposition of the factors.  It was shown in \cite{Aliabadi 2} (see Theorem 1.2-(5)) that if \( A \subseteq G \) is a nonempty Sidon set, then for any subset \( B \) of \( G \) of the same size as \( A \) with \( 1 \notin B \), there is a matching from \( A \) to \( B \).  Below, we establish a lower bound for the number of such matchings.

\begin{corollary}
Let \( A \) and \( B \) be nonempty finite subsets of an abelian group \( G \) such that \( |A| = |B| \) and \( 1 \notin B \). Assume that \( A \) is a Sidon set. Then there are at least \( (|A| - 1)! \) matchings from \( A \) to \( B \).
\end{corollary}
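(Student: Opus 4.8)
The plan is to deduce the lower bound from M.\ Hall's theorem (Theorem~\ref{MarshallHall}), which counts matchings from below in terms of the minimum ``in-degree'' \( d_2(b) \). To apply it I need two ingredients: that at least one matching from \( A \) to \( B \) exists, and a uniform bound \( d_2(b) \geq n \) for a suitable positive integer \( n \). Existence is immediate: since \( A \) is a nonempty Sidon set, \( |A| = |B| \), and \( 1 \notin B \), the matchability result recalled just before the statement (Theorem 1.2-(5) of \cite{Aliabadi 2}) guarantees a matching from \( A \) to \( B \). So the real work is the degree bound.

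For the degree bound, fix \( b \in B \); since \( 1 \notin B \) we have \( b \neq 1 \). By definition \( d_2(b) = |\{ a \in A : ab \notin A \}| = |A| - |\{ a \in A : ab \in A \}| \), so it suffices to show \( |\{ a \in A : ab \in A \}| \leq 1 \). Suppose \( a_1, a_2 \in A \) are distinct with \( a_1 b =: a_1' \in A \) and \( a_2 b =: a_2' \in A \). Multiplying and using commutativity gives \( a_1 a_2' = a_1(a_2 b) = (a_1 b) a_2 = a_1' a_2 \), which exhibits a single group element as a product of two elements of \( A \) in two ways. The Sidon property forces the multiset equality \( \{ a_1, a_2' \} = \{ a_1', a_2 \} \), leaving only the possibilities \( a_1 = a_1' \) (whence \( a_1 b = a_1 \), i.e.\ \( b = 1 \), impossible) or \( a_1 = a_2 \) (contradicting distinctness). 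Hence at most one \( a \in A \) satisfies \( ab \in A \), so \( d_2(b) \geq |A| - 1 \).

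To finish, assume first \( |A| \geq 2 \) and apply Theorem~\ref{MarshallHall} with \( n = |A| - 1 \): the existence of a matching together with \( d_2(b) \geq |A| - 1 \) for every \( b \in B \) yields at least \( (|A| - 1)! \) matchings. The degenerate case \( |A| = 1 \) is handled separately and trivially, since then \( (|A| - 1)! = 0! = 1 \) and a matching exists by the previous paragraph.

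I expect the main obstacle to be the degree bound, and within it the single nonobvious move: recognizing the cross-multiplication identity \( a_1 a_2' = a_1' a_2 \), which converts the relation ``\( ab \in A \)'' into a coincidence of two products of elements of \( A \) and thereby brings the Sidon hypothesis into play. Once that identity is in hand, the multiset analysis and the appeal to M.\ Hall's theorem are routine.
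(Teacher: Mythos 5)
Your proof is correct, and its core — the degree bound $d_2(b) \geq |A| - 1$ via the cross-multiplication identity followed by Theorem~\ref{MarshallHall} — is exactly the paper's argument; your multiset analysis ($a_1 = a_1'$ forces $b = 1$; $a_1 = a_2$ contradicts distinctness) matches the paper's treatment of the identity $a_2 y_1 = a_1 y_2$ step for step. The one place you diverge is the existence of a matching: you simply cite Theorem 1.2-(5) of \cite{Aliabadi 2}, which the paper explicitly acknowledges as sufficient, but the paper instead rederives existence from its own Theorem~\ref{CharForAbelianGroups}, showing via Lemma~\ref{UnionOfCosets} that any pair $S \subseteq A$, $R \subseteq B \cup \{1\}$ with $SR = S$ forces $R = \{1\}$ (an element $x \in R$ would give $o(x) \leq 2$ by the Sidon property, and $o(x) = 2$ is likewise ruled out). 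The paper's choice is deliberate — the corollary is meant to advertise the characterization theorem as a standalone tool — so your shortcut loses that illustrative purpose but nothing logically. One small point in your favor: you handle the degenerate case $|A| = 1$, where Theorem~\ref{MarshallHall} does not apply since it requires $n$ to be a positive integer; the paper passes over this silently, though the claim is trivially true there since $(|A|-1)! = 1$.
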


\begin{proof}
To estimate the number of matchings, we will apply Theorem~\ref{MarshallHall}. By Theorem 1.2-(5) in \cite{Aliabadi 2}, we know that there is at least one matching, but we will prove this fact here in a different way in order to give another example of the applicability of Theorem~\ref{CharForAbelianGroups}. Suppose \( S \) and \( R \) are nonempty sets such that \( S \subseteq A \), \( R \subseteq B \cup \{ 1 \} \), and \( SR = S \). We will show that \( R = \{ 1 \} \).  Let \( x \in R \) and \( a \in S \). By Lemma~\ref{UnionOfCosets}, \( a \langle R \rangle \) is a subset of \( S \), and hence of \( A \).

Observe that \( o(x) \leq  2 \), since otherwise \( a\), \( ax \), \( ax^2 \) would be distinct elements of \( a \langle R \rangle \) satisfying \( (ax)(ax) = (ax^2)a \), contradicting the Sidon property of \( A \).  In fact, we cannot have \( o(x) = 2 \) because this would mean that the elements \( a \neq ax \) satisfy \( aa = (ax)(ax) \), another contradiction. Thus \( R = \{ 1 \} \). By Theorem~\ref{CharForAbelianGroups}, there is a matching from \( A \) to \( B \).

To use Theorem~\ref{MarshallHall}, we also need to show that \( d_2(b) \geq |A| - 1 \) for each \( b \in B \). Assume the contrary. Then there exist \( b \in B \) and distinct \( a_1 , a_2 \in A \) such that \(  a_1b, a_2b \in A \).  Let \( y_1 = a_1b \) and \( y_2 = a_2b \).  Then \( b = a_1^{-1} y_1 = a_2^{-1} y_2 \), hence \( a_2 y_1 = a_1 y_2 \) (note that  \( a_1 \neq y_1 \), since \( b \) cannot equal \( 1 \)).  This contradiction to the Sidon assumption completes the proof.
\end{proof}

An abelian group \( G \) is said to have the {\em matching property} if for all pairs of nonempty finite subsets \( A \) and \( B \) of \( G \) with \( |A| = |B| \) and \( 1 \notin B \), there exists a matching from \( A \) to \( B \).  The result below, which first appeared in \cite{Losonczy 2}, characterizes the groups \( G \) having the matching property. The original proof relied on a theorem of Kneser; the one that follows uses Theorem~\ref{CharForAbelianGroups}.

\begin{corollary}  \label{MatchingPropertyGroup}
Let \( G \) be an abelian group.  Then \( G \) has the matching property if and only if \( G \) is torsion-free or of prime order.  
\end{corollary}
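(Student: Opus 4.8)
The plan is to prove the two implications separately, after first recording the reformulation that \( G \) is torsion-free or of prime order precisely when \( G \) has no nontrivial proper finite subgroup. Indeed, being torsion-free excludes every nontrivial finite subgroup, and a group of prime order has only the trivial and the full subgroup; conversely, if \( G \) has torsion then it contains an element of prime order \( p \), and unless this element generates all of \( G \) (which would force \( |G| = p \)), it produces a nontrivial proper finite subgroup. I would use this dictionary to organize both directions.

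For the \emph{if} direction, suppose \( G \) is torsion-free or of prime order, and let \( A, B \) be finite subsets with \( |A| = |B| \) and \( 1 \notin B \). The key observation is that \( B \) is always a Chowla subset. If \( G \) is torsion-free, every element of \( B \) is nonidentity, hence of infinite order, which certainly exceeds \( |B| \). If \( G \) has prime order \( p \), then \( 1 \notin B \) forces \( |B| \leq p - 1 \), while every nonidentity element has order \( p > |B| \). In either case Corollary~\ref{Chowla set} supplies a matching from \( A \) to \( B \), so \( G \) has the matching property.

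For the \emph{only if} direction I would argue by contraposition: assuming \( G \) is neither torsion-free nor of prime order, I would exhibit a pair \( (A,B) \) admitting no matching. Since \( G \) has torsion, it contains an element of prime order \( p \); let \( H \) be the subgroup it generates, so \( |H| = p \geq 2 \). Because \( G \) is not of prime order, \( H \neq G \), so \( H \) is a nontrivial proper finite subgroup; fix \( g \in G \setminus H \). I would then set \( A = H \) and \( B = (H \setminus \{1\}) \cup \{g\} \), noting \( |A| = |B| = p \) and \( 1 \notin B \). Taking \( S = R = H \), we have \( SR = S \) since \( H \) is a subgroup, and \( R \subseteq B \cup \{1\} \), yet \( B \setminus R = \{g\} \), so that \( |S| = p > 1 = |B \setminus R| \). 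By Theorem~\ref{CharForAbelianGroups} there is no matching from \( A \) to \( B \), and hence \( G \) fails the matching property.

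The main obstacle is locating the right witness in the reverse direction: the content lies in recognizing that a single proper nontrivial finite subgroup \( H \) is exactly the obstruction, and in the choice \( S = R = H \), which makes the coset-closure condition \( SR = S \) automatic while arranging that \( R \) omits only the single element \( g \) of \( B \). The supporting group-theoretic fact, that a group which is neither torsion-free nor of prime order must contain such an \( H \), is the only place an external ingredient (an element of prime order, via Cauchy's theorem) is needed.
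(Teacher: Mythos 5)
Your proof is correct, and it differs from the paper's in an interesting way on both directions. For the \emph{if} direction, the paper verifies the hypothesis of Theorem~\ref{CharForAbelianGroups} directly: given \( S, R \) with \( SR = S \), Lemma~\ref{UnionOfCosets} puts a coset \( a\langle R \rangle \) inside \( S \subseteq A \), so \( \langle R \rangle \) is a finite proper subgroup, hence trivial under the hypothesis on \( G \), and the size condition follows at once. You instead observe that under the hypothesis every admissible \( B \) is automatically a Chowla subset (infinite order in the torsion-free case; order \( p > |B| \leq p-1 \) in the prime-order case) and invoke Corollary~\ref{Chowla set}. Since that corollary is itself deduced from Theorem~\ref{CharForAbelianGroups} earlier in the paper, the underlying machinery is the same, but your reduction is a genuinely different and arguably slicker decomposition; what you lose is that the paper's direct verification showcases the \( S, R \) condition itself, which is the stated purpose of this application. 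For the \emph{only if} direction, you use the same witness as the paper, \( A = H \), \( B = (H \setminus \{1\}) \cup \{g\} \), but you certify the failure through Theorem~\ref{CharForAbelianGroups} with the explicit violating pair \( S = R = H \) (giving \( |S| = |H| \geq 2 > 1 = |B \setminus R| \)), whereas the paper argues directly that every \( b \in B \setminus \{g\} \) satisfies \( Hb = H \), so no bijection can be a matching; your version is a nice illustration that the characterization detects the obstruction, and it is fully rigorous. One small remark: your appeal to Cauchy's theorem is more than is needed, since an element \( x \) of finite order \( n > 1 \) already yields an element of prime order via \( x^{n/p} \) for any prime \( p \mid n \); also note the paper takes \( H \) to be an arbitrary nontrivial proper finite subgroup rather than one of prime order, which changes nothing. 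Finally, you have implicitly handled the trivial group correctly (it is torsion-free and satisfies the matching property vacuously), which the paper disposes of with an explicit opening sentence.
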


\begin{proof}
First observe that the trivial group has the matching property and is torsion-free, so we may assume \( |G| > 1 \) in what follows.

Suppose \( G \) is torsion-free or of prime order, and let \( A \) and \( B \) be nonempty finite subsets of \( G \) such that \( |A| = |B| \) and \( 1 \notin B \).  We will use Theorem~\ref{CharForAbelianGroups} to show that there is a matching from \( A \) to \( B \).

As usual, let \( S \) and \( R \) be nonempty sets such that  \( S \subseteq A \), \( R \subseteq B \cup \{ 1 \} \), and \( SR = S \). Let \( a \in S \) and note that \( a \langle R \rangle \subseteq S \) by Lemma~\ref{UnionOfCosets}. Hence 
\[ | \langle R \rangle | = |a \langle R \rangle | \leq |S| \leq |A|, \] 
from which we see that \( \langle R \rangle \) is finite and unequal to \( G \).  By hypothesis, \( G \) has no nontrivial proper finite subgroups, whence \( \langle R \rangle = \{ 1 \} \). We can now apply Theorem~\ref{CharForAbelianGroups}.

Conversely, assume that \( G \) is neither torsion-free nor of prime order.  Then \( G \) has a nontrivial proper finite subgroup \( H \). Choose \( g \in G \setminus H \) and define \( A = H \) and \( B = (H \setminus \{ 1 \} ) \cup \{ g \} \).  Then \( 2 \leq  |A| = |B| < \infty \), \( 1 \notin B \), and there is no matching from \( A \) to \( B \), since every \( b \in B \setminus \{ g \} \) satisfies \( Hb = H \).
\end{proof}

For our final application of Theorem~\ref{CharForAbelianGroups}, we present a generalization of a result on the existence of matchings which appeared recently in  \cite{Aliabadi 2} (see Theorem 1.2-(7)).

\begin{corollary}\label{large sumsets}
Let \( A \) and \( B \) be nonempty finite subsets of an abelian group \( G \) such that \( |A| = |B| \) and \( 1 \notin B \). Assume that for every \( a \in G \) and every nontrivial proper finite subgroup \( H \) of \( G \), we have 
\[ |aH \cap A| + |H \cap B| < |H| + 1. \]
Then there exists a matching from \( A \) to \( B \).
\end{corollary}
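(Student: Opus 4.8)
The plan is to verify the condition of Theorem~\ref{CharForAbelianGroups}: fix an arbitrary pair of nonempty sets $S \subseteq A$ and $R \subseteq B \cup \{1\}$ with $SR = S$, and show that $|S| \leq |B \setminus R|$. Following the pattern used for Corollary~\ref{ProgAndOrder} and the Sidon corollary, I would aim for the stronger conclusion that the hypothesis forces $R = \{1\}$. Once that is established, the inequality is immediate: since $1 \notin B$, we get $|B \setminus R| = |B \setminus \{1\}| = |B| = |A| \geq |S|$, and Theorem~\ref{CharForAbelianGroups} then produces a matching.

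To set this up, let $H = \langle R \rangle$. By Lemma~\ref{UnionOfCosets}, $S$ is a union of cosets of $H$, and since $S$ is nonempty and contained in $A$, at least one coset $aH$ satisfies $aH \subseteq A$. In particular $|H| = |aH| \leq |A| < \infty$, so $H$ is finite. This finiteness and the existence of a full coset of $H$ inside $A$ are the two facts that will let the hypothesis bite.

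Now I would argue by contradiction, assuming $R \neq \{1\}$, i.e. that $H$ is nontrivial. First I would rule out $H = G$: if $H = G$, then $S$, being a union of cosets of $G$, equals $G$, forcing $A = G$ and hence $B = G$ (as $|B| = |A| = |G|$ with $B \subseteq G$), which contradicts $1 \notin B$. Thus $H$ is a nontrivial proper finite subgroup, so the hypothesis applies to it. Taking the coset representative $a$ with $aH \subseteq A$, we have $|aH \cap A| = |H|$, so the hypothesis gives $|H| + |H \cap B| < |H| + 1$, i.e. $H \cap B = \emptyset$. On the other hand $R \subseteq H$ and $R \setminus \{1\} \subseteq B$, so $R \setminus \{1\} \subseteq H \cap B = \emptyset$, forcing $R = \{1\}$ and contradicting the nontriviality of $H$.

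Hence $R = \{1\}$ for every admissible pair, the required inequality holds, and Theorem~\ref{CharForAbelianGroups} yields a matching from $A$ to $B$. The only genuinely delicate point is confirming that the hypothesis is actually applicable, namely that $H = \langle R \rangle$ is a \emph{nontrivial proper finite} subgroup; once that is secured, the essential observation is that a full coset $aH \subseteq A$ \emph{saturates} the term $|aH \cap A| = |H|$, which (thanks to the strict inequality $< |H| + 1$) forces $|H \cap B| = 0$ and collapses $R$ to $\{1\}$. I expect the coset-counting and the edge case $H = G$ to be the places most prone to minor slips, but neither poses a serious obstacle.
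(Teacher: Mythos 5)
Your proof is correct and follows essentially the same route as the paper's: reduce to Theorem~\ref{CharForAbelianGroups}, use Lemma~\ref{UnionOfCosets} to place a full coset $a\langle R\rangle$ inside $A$, and let the hypothesis with $H = \langle R\rangle$ force $H \cap B = \emptyset$, hence $R = \{1\}$. Your explicit treatment of the edge case $H = G$ merely spells out what the paper compresses into the phrase ``finite and proper,'' so there is no substantive difference.
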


\begin{proof}
Suppose \( S \) and \( R \) are nonempty sets satisfying \( S \subseteq A \), \( R \subseteq B \cup \{ 1 \} \), and \( SR = S \). Let \( a \in S \) and observe, as before, that \( a \langle R \rangle \subseteq S \) by Lemma~\ref{UnionOfCosets}. Hence \( \langle R \rangle \) is finite and proper.  Assume for a contradiction that \( \langle R \rangle \) is nontrivial, and note that this implies \( \langle R \rangle \cap B \neq \emptyset \). Applying our hypothesis (taking \( H = \langle R \rangle \)), we get 
\[ |a \langle R \rangle \cap A| + | \langle R \rangle \cap B| < | \langle R \rangle | + 1. \]
Since \( a \langle R \rangle \subseteq A \), this simplifies to
\[ |\langle R \rangle| + | \langle R \rangle \cap B| < | \langle R \rangle | + 1, \]
forcing \( \langle R \rangle \cap B = \emptyset \), a contradiction. We thus have \( R  = \{ 1 \} \), enabling us to apply Theorem~\ref{CharForAbelianGroups}.
\end{proof}

\section{Matching subspaces in a field extension}\label{MSFE}

We first adopt the following conventional notation. For a $K$-vector space $V$, the dual space of \( V \) is denoted by \( V^* \).  Thus, 
\[
V^* = \{ f : V \to K \mid f \text{ is a \( K \)-linear mapping} \}.
\]
For any subspace \( W \subseteq V \), we define its annihilator \( W^\perp \) in \( V^* \) by
\[
W^\perp = \{ f \in V^* \mid W \subseteq \ker f \}.
\]
It is a standard result that if \( V \) is finite dimensional, 
\[
\dim W^\perp = \dim V - \dim W.
\]

The following is our second main result. It is the linear counterpart to Theorem~\ref{CharForAbelianGroups}. We note that the approach taken here parallels that in the abelian group setting, where Dyson’s $e$-transform plays a central role. In the proof below, we introduce and employ a linear analogue of the $e$-transform, which, to the best of our knowledge, has not been previously investigated and may be of independent interest.

\begin{theorem}\label{main linear}
    Let \( K \subsetneq L \) be a field extension, and let \( A \) and \( B \) be two \( n \)-dimensional \( K \)-subspaces of \( L \), with \( n > 0 \) and \( 1 \notin B \). Then \( A \) is matched to \( B \) if and only if for every pair of nonzero \( K \)-subspaces \( S \subseteq A \) and \( R \subseteq B\oplus K \) with \( \langle SR \rangle = S \), we have
\[
\dim S\leq \dim (B/(B\cap R)).
\]
\end{theorem}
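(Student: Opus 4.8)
The plan is to mirror the proof of Theorem~\ref{CharForAbelianGroups}, replacing Corollary~\ref{RevisedPhilipHall} by a linear ``revised Rado'' criterion and Dyson's $e$-transform by a linear analogue. First I would record the right reformulation of matchability. For a subspace $S \subseteq A$ set $U_S = \{ b \in B : Sb \subseteq A \}$; since $A$ and $S$ are $K$-subspaces, $U_S$ is a $K$-subspace of $B$. The claim is that $A$ is matched to $B$ if and only if $\dim S \le \dim(B/U_S)$ for every nonzero $S \subseteq A$. To prove this, fix an ordered basis $(a_1,\dots,a_n)$ of $A$ and put $W_i = a_i^{-1}A \cap B$. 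Matching this basis to some ordered basis $(b_1,\dots,b_n)$ of $B$ amounts, after passing to the dual basis $(\varphi_1,\dots,\varphi_n)$ of $B^*$ (so that $\langle b_j : j \ne i\rangle = \ker \varphi_i$), to requiring $\varphi_i \in W_i^\perp$ for each $i$ with the $\varphi_i$ linearly independent; that is, $(\varphi_1,\dots,\varphi_n)$ is a free transversal of $\{W_i^\perp\}_{i}$ in $B^*$. By Rado's theorem (Theorem~\ref{Linear Hall}) together with $\sum_{i\in J} W_i^\perp = (\bigcap_{i\in J}W_i)^\perp$ and $\bigcap_{i\in J} W_i = U_{\langle a_i : i \in J\rangle}$, matchability of the basis is equivalent to $\dim U_S \le n - \dim S$ for every subspace $S$ spanned by part of that basis; ranging over all bases of $A$ yields the stated criterion, since every subspace of $A$ is spanned by part of some basis.

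With this in hand the forward direction is immediate: assuming $A$ is matched to $B$ and given nonzero $S \subseteq A$ and $R \subseteq B\oplus K$ with $\langle SR\rangle = S$, every $b \in R \cap B$ satisfies $Sb \subseteq \langle SR\rangle = S \subseteq A$, so $R\cap B \subseteq U_S$; combining $\dim(B/U_S) \le \dim(B/(R\cap B))$ with the criterion $\dim S \le \dim(B/U_S)$ gives $\dim S \le \dim(B/(R\cap B))$.

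For the converse I would fix a nonzero $S \subseteq A$ and set $R = U_S \oplus K$ (with $K$ the line $K\cdot 1$), noting $R \cap B = U_S$ because $1 \notin B$. If $\langle SR\rangle = S$, the hypothesis applied to $(S,R)$ gives $\dim S \le \dim(B/U_S)$ directly. Otherwise I introduce the linear $e$-transform: since $S \subseteq \langle SR\rangle$ (as $1 \in R$), the strict inclusion forces some $e \in S$ and $r \in R$ with $er \notin S$; necessarily $e \ne 0$, so multiplication by $e$ is a $K$-linear automorphism of $L$. Define
\[
S_1 = S + eR, \qquad R_1 = R \cap e^{-1}S.
\]
The properties to verify are $\langle S_1 R_1\rangle \subseteq \langle SR\rangle$, that $x \mapsto ex$ is a $K$-isomorphism $R_1 \to S \cap eR$ (whence $\dim S_1 + \dim R_1 = \dim S + \dim R$), that $1 \in R_1 \subseteq R$, and that $er \in S_1 \setminus S$ so that $\dim S < \dim S_1$ while $S_1 \subseteq A$. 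Iterating produces $S_m, R_m$ with $\langle S_m R_m\rangle = S_m$ (termination holds since $\dim S_k$ strictly increases inside the finite-dimensional $A$), and $\dim S_m + \dim R_m = \dim S + \dim R$ with $1 \in R_m \subseteq R$. Applying the hypothesis to $(S_m, R_m)$ and using $\dim(R_m \cap B) = \dim R_m - 1$ (which follows because the projection $U_S \oplus K \to K$ restricts to a surjection on $R_m$ with kernel $R_m \cap B$, using $1 \in R_m$) yields $\dim S_m \le n - \dim R_m + 1$; substituting the dimension identity and $\dim R = \dim U_S + 1$ collapses this to $\dim S \le n - \dim U_S = \dim(B/U_S)$, verifying the criterion.

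The main obstacle is the design and analysis of the linear $e$-transform — in particular guessing the correct definitions $S_1 = S + eR$ and $R_1 = R \cap e^{-1}S$ and proving the dimension-preservation identity through the isomorphism $R_1 \cong S \cap eR$ and the product containment $\langle S_1 R_1\rangle \subseteq \langle SR\rangle$. Once these structural facts are in place, termination and the final arithmetic are routine translations of the abelian argument. A secondary point requiring care is the duality bookkeeping in the revised Rado step, namely the passage from bases of $B$ to free transversals in $B^*$ and the identity $\bigcap_{i \in J} W_i = U_{\langle a_i : i\in J\rangle}$.
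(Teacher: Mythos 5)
Your proposal is correct and takes essentially the same approach as the paper: the same linear \( e \)-transform with identical definitions \( S_1 = S + eR \), \( R_1 = R \cap (Se^{-1}) \), the same dimension-preservation identity via the isomorphism \( R_1 \cong S \cap eR \), and the same passage to annihilators in \( B^* \) followed by Rado's theorem. The only difference is organizational: you factor out the duality/Rado step as a standalone criterion (\( A \) matched to \( B \) iff \( \dim S \leq \dim(B/U_S) \) for all nonzero \( S \subseteq A \)) before running the transform on an arbitrary subspace, whereas the paper runs the transform on the subspaces \( \langle a_i : i \in J \rangle \) attached to a fixed basis and invokes Rado afterwards--mathematically the two are the same argument.
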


\begin{proof}
   Assume that $A$ is matched to $B$. Suppose $S$ and $R$ are nonzero subspaces satisfying $S \subseteq A$ and $R \subseteq B \oplus K$ with \( \langle SR \rangle = S \). Let $\mathcal{S} = \{a_1, \ldots, a_\ell\}$ be a basis for $S$. We will show that $\dim (R\cap B) \leq n - \ell$. Extend $\mathcal{S}$ to a basis $\mathcal{A} = \{a_1, \ldots, a_\ell, a_{\ell+1}, \ldots, a_n\}$ for \( A \). Since $A$ is matched to $B$, there exists a basis $\mathcal{B} = \{b_1, \ldots, b_n\}$ for $B$ such that $\mathcal{A}$ is matched to $\mathcal{B}$. Thus,
   \[
   a_i^{-1}A \cap B \subseteq \langle b_1, \ldots, b_{i-1}, b_{i+1}, \ldots, b_n \rangle  \quad \text{for each }i \in [n].
   \]
   This implies
   \begin{align}\label{eq1}
       \bigcap_{i \in [\ell]} (a_i^{-1}A \cap B) \subseteq \bigcap_{i \in [\ell]} \langle \mathcal{B} \setminus \{b_i\} \rangle = \langle b_{\ell+1}, \cdots, b_n \rangle.
   \end{align}
   On the other hand, since \( \langle SR \rangle = S \), one has \( a_i R \subseteq S \) for each $i\in[\ell]$, and hence \( R \subseteq a_i^{-1} S \subseteq a_i^{-1} A \). Therefore,
\[
R\cap B \subseteq \bigcap_{i \in [\ell]} (a_i^{-1} A \cap B).
\] 
Combining this with (\ref{eq1}), we have $\dim (R \cap B) \leq n - \ell$, which implies  
\[
\dim S\leq \dim (B/(R\cap B)),
\]
as desired.

\bigskip

   Conversely, assume that the condition in the statement involving \( S \) and \( R \) holds. We will show that $A$ is matched to $B$. Let $\mathcal{A}=\{a_1,\ldots, a_n\}$ be a basis for $A$. Let $J\subseteq [n]$ be nonempty. Define $S=\langle a_i:i\in J \rangle$, $T= \bigcap_{i \in J} (a_i^{-1}A \cap B)$ and $R=T\oplus K$. We claim that $\dim T \leq n - |J|$. Our argument splits into two cases.
   
\bigskip

\textbf{Case 1:} \( \langle SR \rangle = S \). By our hypothesis, we have
\begin{align*}
  \dim S\leq \dim ( B \setminus (R \cap B)).  
\end{align*}
Since \( T \subseteq B \), \( R = T \oplus K \) and \( 1 \notin B \), it follows that \( T = R \cap B \), and so  
\[
\dim T=\dim (R \cap B)\leq n-|J|,
\]
as claimed.

\bigskip

\textbf{Case 2:} \( \langle SR \rangle \neq S \).  Then \( S\subsetneq SR \). Choose \( e \in S \) and \( r \in R \) such that \( er \in SR\setminus S \). Define subspaces \( S_1 \) and \( R_1 \) as follows:
\begin{align*}
S_1 &= S + eR, \\
R_1 &= R \cap (Se^{-1}).
\end{align*}
We claim that the following conditions hold:

\begin{enumerate}
    \item[(i)] \( \langle S_1 R_1\rangle \subseteq \langle SR\rangle \subseteq A \).
    \item[(ii)] \( \dim S_1 + \dim R_1 = \dim S + \dim R \).
    \item[(iii)] \( 1 \in R_1 \subseteq R \subseteq B \oplus K \).
    \item[(iv)] \( S_1 \subseteq A \) and \( \dim S < \dim S_1 \).
\end{enumerate}
Condition (iii) and the first inclusion in (i) follow directly from the definitions of \( S \), \( R \), \( S_1 \), and \( R_1 \). 

To verify (ii), we first apply the dimension of a sum formula for vector subspaces:
\begin{align*}
\dim S_1 &= \dim(S + eR) \\
         &= \dim S + \dim eR - \dim(S \cap eR) \\
         &= \dim S + \dim R - \dim(S \cap eR).
\end{align*}
Now observe that the map \( x \mapsto xe \) defines a linear isomorphism from \( R \cap (S e^{-1}) \) to \( S \cap (eR) \), since \( e \neq 0 \). This implies
\[
\dim R_1 = \dim(R \cap (S e^{-1})) = \dim(S \cap eR),
\]
so
\[
\dim S_1 + \dim R_1 = \dim S + \dim R,
\]
confirming (ii).

Concerning (iv) and the second inclusion in (i), we proceed as follows. By construction, we have
\[
S T \subseteq A.
\]
Since \( R = T \oplus K \), it follows that
\[
S R = S (T \oplus K) \subseteq \langle S T \cup S \rangle \subseteq A,
\]
and hence \( \langle SR \rangle \subseteq A \).  In particular,
\[
eR \subseteq SR \subseteq A.
\]
Also, we have \( S \subseteq A \) by the definition of \( S \). Therefore,
\[
S_1 = S + eR \subseteq A.
\]
Moreover, \( S_1 \) properly contains \( S \), since \( e r \in S_1 \setminus S \). Hence
\[
\dim S < \dim S_1.
\]
All four conditions have been verified. Now if \( \langle S_1 R_1 \rangle \neq S_1 \), we repeat the above procedure, substituting \( S_1 \) for \( S \) and \( R_1 \) for \( R \). This iterative process continues until we obtain nonzero subspaces \( S_m \) and \( R_m \) such that \( \langle S_m R_m \rangle = S_m \). Termination is guaranteed, as \( A \) is finite-dimensional and the sequence of subspaces \( S_1, S_2, \ldots \) strictly increases in dimension.

At the final step, the subspaces \( S_m \) and \( R_m \) satisfy:
\begin{itemize}
\item[(v)] \( \langle S_m R_m \rangle = S_m \subseteq A \),
\item[(vi)] \( \dim S_m + \dim R_m = \dim S + \dim R \),
\item[(vii)] \( 1 \in R_m \subseteq R \subseteq B \oplus K \).
\end{itemize}

By applying our hypothesis to \( S_m \) and \( R_m \), which is justified by (v) and (vii), we obtain
\[
\dim S_m \leq \dim (B\setminus (R_m \cap B)).
\]
Since \( \dim (R_m \cap B) = \dim R_m - 1 \), it follows that 
\[
\dim S_m + \dim R_m \leq n + 1,
\]
which, combined with (vi) and the fact that \( \dim R = \dim T + 1 \), yields
\[
\dim T \leq n - |J|,
\]
as claimed.
\bigskip

So in both cases we have \( \dim T \leq n - |J| \). Passing to the annihilator in the dual space \( B^* \), we obtain
\[
\dim T^\perp \geq |J|,
\]
which leads to
\[
\dim \left( \sum_{i \in J} (a_i^{-1} A \cap B)^\perp \right) \geq |J|.
\]
Applying Theorem~\ref{Linear Hall} to the family \( \{(a_i^{-1} A \cap B)^\perp\}_{i \in [n]} \), we obtain a free transversal \( \{f_1, \ldots, f_n \}  \subseteq B^* \) such that 
\begin{align}\label{eq2}
f_i \in (a_i^{-1} A \cap B)^\perp  \quad \text{for each }i \in [n].
\end{align}
Note that \( \{f_1, \ldots, f_n\} \) is a basis for \( B^* \).

Let \( \mathcal{B} = \{b_1, \ldots, b_n\} \) be the basis of \( B \) dual to \( \{f_1, \ldots, f_n\} \).  We show that \( \mathcal{A} \) is matched to \( \mathcal{B} \). Observe, \( f_i(b_j) = \delta_{ij} \), and so
\[
\ker f_i = \langle b_1, \ldots, b_{i-1}, b_{i+1}, \ldots, b_n \rangle \quad \text{for each }i \in [n].  
\]
This combined with (\ref{eq2}) gives us 
\[
a_i^{-1} A \cap B \subseteq \langle b_1, \ldots, b_{i-1}, b_{i+1}, \ldots, b_n \rangle \quad \text{for each } i \in [n],
\]
as desired. Therefore, \( A \) is matched to \( B \), completing the proof.
\end{proof}

We will need some basic notation from field theory. If \( K \subseteq L \) is a field extension and \( x \in L \), we write \( K(x) \) for the subfield of \( L \) generated by \( K \cup \{ x \} \), and \( [K(x) : K ] \) for the dimension of \( K(x) \) as a vector space over \( K \). If \( x \) is algebraic over \( K \), we write \( m_x(t) \) for its minimal polynomial; recall that \( \deg m_x(t) = [K(x) : K ] \).

The following lemma provides insight into how the condition \( \langle SR \rangle = S \) from Theorem~\ref{main linear} will be used in applications.

\begin{lemma} \label{Linear Lemma}
Let \( K \subseteq L \) be a field extension, let \( n \) be a positive integer, let \( S \) and \( R \) be nonzero \( K \)-subspaces of \( L \), and let \( x \in R \).  Assume that \( \langle SR \rangle = S \) and  \( \dim S \leq n \). Then \( x \) is algebraic over \( K \) and in fact \( [K(x): K] \leq n \). Also, for each \( a \in S \), we have \( aK(x) \subseteq S \).  
\end{lemma}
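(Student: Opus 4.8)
The plan is to extract from the hypothesis $\langle SR\rangle = S$ the single structural fact that $S$ is stable under multiplication by $x$, and then let finite-dimensionality do the rest. First I would observe that since $x\in R$, every $a\in S$ gives $ax\in SR\subseteq\langle SR\rangle=S$; thus multiplication by $x$ restricts to a well-defined $K$-linear endomorphism $\phi\colon S\to S$, $\phi(s)=xs$. Iterating, $x^k a\in S$ for every $k\geq 0$ and every $a\in S$.

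For the algebraicity and the degree bound, I would fix a nonzero $a\in S$ and consider the $\dim S+1$ vectors $a, xa, x^2a,\ldots, x^{\dim S}a$, all of which lie in the $(\dim S)$-dimensional space $S$ by the previous step. They must be $K$-linearly dependent, so there is a nonzero polynomial $p(t)=\sum_{k=0}^{\dim S} c_k t^k\in K[t]$ of degree at most $\dim S$ with $p(x)\,a=\bigl(\sum_k c_k x^k\bigr)a=0$. Since $L$ is a field and $a\neq 0$, this forces $p(x)=0$. Hence $x$ is algebraic over $K$, and because its minimal polynomial $m_x(t)$ divides $p(t)$ we get $[K(x):K]=\deg m_x\leq\deg p\leq\dim S\leq n$. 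Alternatively, one may take $p$ to be the characteristic polynomial of $\phi$ and invoke Cayley--Hamilton together with the fact that $p(\phi)$ is multiplication by $p(x)$; this yields the same degree bound.

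Finally, for the containment $aK(x)\subseteq S$, I would use that $x$ is now known to be algebraic, so that the $K$-algebra $K[x]$ generated by $x$ already coincides with the field $K(x)$. Since $x^k a\in S$ for every $k\geq 0$ and $S$ is a $K$-subspace, every $K$-linear combination $\bigl(\sum_k c_k x^k\bigr)a$ lies in $S$; that is, $K[x]\cdot a\subseteq S$, whence $aK(x)=aK[x]\subseteq S$, as required.

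There is no serious obstacle here; the whole argument turns on the one observation that $\langle SR\rangle = S$ translates into the invariance $xS\subseteq S$. The only point requiring a little care is why a polynomial relation satisfied by the vector $a$ upgrades to a relation satisfied by the scalar $x$ — this is exactly where the field structure of $L$ (absence of zero divisors) is used — together with the standard equality $K[x]=K(x)$ for algebraic $x$, which is what lets us pass from the polynomial algebra to the full subfield in the last step.
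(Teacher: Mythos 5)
Your proof is correct and follows essentially the same route as the paper's: both derive $ax^k \in S$ for all $k$ from $\langle SR\rangle = S$, use linear dependence of $\dim S + 1$ such vectors in $S$ together with cancellation in the field $L$ to get a polynomial relation for $x$ of degree at most $n$, and then conclude $aK(x) \subseteq S$ from the fact that every element of $K(x)$ is a polynomial in $x$ (i.e., $K[x] = K(x)$). Your repackaging of the invariance $xS \subseteq S$ as an endomorphism (with the Cayley--Hamilton aside) and your implicit handling of the edge cases $x = 0$, $a = 0$, which the paper treats explicitly, are only cosmetic differences.
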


\begin{proof}
Suppose \( a \in S \). Note that both conclusions hold when \( x = 0 \), and the second conclusion holds when \( a = 0 \). Assume \( x, a \neq 0 \).  

We have \( ax \in \langle SR \rangle = S \) and by induction \( ax^k \in S \) for all positive integers \( k \). Since \( \dim S \leq n \), there exist scalars \( c_1, \ldots , c_{n+1} \in K \), not all \( 0 \), such that 
\[
\sum_{i=1}^{n+1}c_iax^i = 0.
\]
Multiplying through by \( (ax)^{-1} \) gives us
\[
\sum_{i=1}^{n+1}c_ix^{i-1} = 0,
\]
which shows that \( x \) is algebraic over \( K \) and moreover that the minimal polynomial \( m_x(t) \) has degree at most \( n \). Thus \( [K(x):K] \leq n \). 

For the last part of the lemma, note that \( x^k \in a^{-1}S \) for all \( k \geq 1 \), and also \(1 \in a^{-1} S \) because \( a \in S \).  Since every element of \( K(x) \) can be written in the form \( p(x) \) for some polynomial \( p(t) \) in \( K[t] \), it follows that \( K(x) \) is contained in the \( K \)-subspace \( a^{-1}S \), and hence \( aK(x) \subseteq S \). 
\end{proof}

Let \( K \subseteq L \) be a field extension, and let \( A \) be a \( K \)-subspace of \( L \). We say that \( A \) is a \emph{Chowla subspace} if for every \( a \in A \setminus \{0\} \), we have
\[
[K(a) : K] \geq \dim A + 1.
\]

Note that if \( A \) is a Chowla subspace, then \( 1 \notin A \).

The above definition first appeared in~\cite{Aliabadi 2} and was motivated by Hamidoune’s findings~\cite{Hamidoune} concerning matchable subsets \( A \) and \( B \) of a group, where \( B \) is a Chowla subset. Also in~\cite{Aliabadi 2}, a potential for matchings in the linear setting, with \( B \) a Chowla subspace, was conjectured (Conjecture~5.2). The result below provides an affirmative answer to this conjecture.

\begin{corollary}\label{linear Chowla}
    Let \( K \subsetneq L \) be a field extension, and let \( A \) and \( B \) be two \( n \)-dimensional \( K \)-subspaces of \( L \), with \( n > 0 \). Assume that \( B \) is a Chowla subspace. Then \( A \) is matched to \( B \).
\end{corollary}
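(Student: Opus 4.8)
The plan is to apply Theorem~\ref{main linear} directly, so the task reduces to verifying its hypothesis: for every pair of nonzero $K$-subspaces $S \subseteq A$ and $R \subseteq B \oplus K$ with $\langle SR \rangle = S$, we must show $\dim S \leq \dim(B/(R \cap B))$. The guiding idea is that the Chowla condition on $B$ is strong enough to force $R$ to be as small as possible, namely $R \cap B = \{0\}$, after which the required inequality becomes nearly automatic. This mirrors the group-theoretic arguments (e.g.\ Corollaries~\ref{ProgAndOrder} and~\ref{Chowla set}), where the Chowla hypothesis collapses $R$ to $\{1\}$; here the linear analogue should collapse $R \cap B$ to $\{0\}$.

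\textbf{First I would} take an arbitrary $x \in R$ and a nonzero $a \in S$, and invoke Lemma~\ref{Linear Lemma}. Since $\langle SR \rangle = S$ and $\dim S \leq \dim A = n$, the lemma yields $[K(x):K] \leq n$. Now I would argue that $x \notin B$ unless $x = 0$: indeed, if $x$ were a nonzero element of $B$, the Chowla property would force $[K(x):K] \geq \dim B + 1 = n + 1$, contradicting $[K(x):K] \leq n$. Since this holds for every $x \in R$, no nonzero element of $B$ lies in $R$, which gives $R \cap B = \{0\}$.

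\textbf{Once} $R \cap B = \{0\}$ is established, the conclusion follows immediately: $\dim(B/(R \cap B)) = \dim B = n \geq \dim S$, where the last inequality holds because $S \subseteq A$ and $\dim A = n$. This verifies the hypothesis of Theorem~\ref{main linear}, and so $A$ is matched to $B$.

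\textbf{The main obstacle} I anticipate is purely one of care rather than depth. The subtle point is the logical step $R \cap B = \{0\}$: it is tempting to try to prove the stronger statement $R = K$ (the direct analogue of $R = \{1\}$ in the group setting), but $R$ is only known to sit inside $B \oplus K$, and an element of $R$ may have a nonzero $K$-component, so $R$ need not equal $K$ exactly. What the argument genuinely needs, and all it needs, is that $R$ meets $B$ trivially; framing the deduction in terms of $R \cap B$ rather than $R$ itself sidesteps this issue cleanly. One should also confirm that Lemma~\ref{Linear Lemma} applies for \emph{every} $x \in R$ (the case $x = 0$ is vacuous), so that the trivial-intersection conclusion is uniform over all of $R$.
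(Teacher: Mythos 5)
Your proof is correct, and it is a mild but genuine streamlining of the paper's argument. Both proofs run through Theorem~\ref{main linear} and use Lemma~\ref{Linear Lemma} to obtain $[K(x):K] \leq n$ for $x \in R$, but they diverge at the key claim. The paper proves the stronger statement $R = K$: for $x \in R \setminus K$ it writes $x = b + c$ with $b \in B \setminus \{0\}$ and $c \in K$, observes that $m_b(t) = m_x(t+c)$, and concludes $[K(x):K] = [K(b):K] \geq n+1$ by the Chowla property, a contradiction; the equality $R \cap B = \{0\}$ is then extracted from $R = K$ together with $1 \notin B$. You instead apply the Chowla bound only to elements of $R \cap B$ itself: any nonzero $x \in R \cap B$ satisfies both $[K(x):K] \leq n$ (because $x \in R$, via the lemma) and $[K(x):K] \geq n+1$ (because $x \in B \setminus \{0\}$, via Chowla), so $R \cap B = \{0\}$ follows directly, and that is all Theorem~\ref{main linear} requires. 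What your route buys is brevity: the minimal-polynomial shift $m_b(t) = m_x(t+c)$ is dispensed with entirely. What the paper's route buys is the structural fact $R = K$, the exact linear analogue of the collapse $R = \{1\}$ in the group-theoretic corollaries. On that point, your closing remark that ``$R$ need not equal $K$'' is not accurate: under the hypotheses $R = K$ does hold (that is precisely what the paper proves); it is simply that your argument has no need of it.

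One small bookkeeping item: Theorem~\ref{main linear} assumes $1 \notin B$, which is not listed among the corollary's hypotheses, so you should note explicitly that the Chowla property forces $1 \notin B$ (indeed $[K(1):K] = 1 < n+1$), as the paper records immediately after the definition of a Chowla subspace.
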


\begin{proof}
We will use Theorem~\ref{main linear}. Suppose \( S \subseteq A \) and \( R \subseteq B \oplus K \) are nonzero subspaces such that \( \langle SR \rangle = S \). 

\medskip

We claim that \( R = K \).  To see this, assume the contrary. Choose a nonzero element \( a \in S \) and an element \( x \in R \setminus K \). Note that, since \( S \) is contained in \( A \), we have \( \dim S \leq n \). By Lemma~\ref{Linear Lemma}, 
\[ [K(x):K] = \deg m_x(t) \leq n. 
\] 
On the other hand, since \( x \) lies in \( R \subseteq B \oplus K \) but not in \( K \), we can write \( x = b + c \), where \( b \in B \setminus \{ 0 \} \) and \( c \in K \). Note that \( b \) must be algebraic over \( K \), since \( x \) and \( c \) are, and moreover \( m_b(t) \) has the same degree as \( m_x(t) \) since \( m_b(t) = m_x(t + c) \).  We now use the fact that \( B \) is a Chowla subspace to obtain
\[
[K(x) : K ] = [K(b) : K ] \geq n + 1,
\]
a contradiction.  The claim is proved.

\medskip

Since \( 1 \notin B \), the claim gives us \( R \cap B = \{ 0 \} \).  Therefore, \( \dim S \leq n = \dim (B / (R \cap B)) \). We apply Theorem~\ref{main linear} to complete the proof.
\end{proof}

\begin{remark}
   Note that Corollary~\ref{linear Chowla} can be used to derive the commutative case of Theorem~5.5 from \cite{Eliahou 2}, which addresses the matchability of small subspaces.\footnote {We note that the results in \cite{Eliahou 2} cited in this paper are stated and proved in the more general setting of a skew field extension \( K \subseteq L \), where \( K \) is contained in the center of \( L \).}

Let \( K \subsetneq L \) be a field extension, and let \( A, B \subseteq L \) be \( n \)-dimensional \( K \)-subspaces with \( 1 \notin B \). Suppose \( n < n_0(K, L) \), where \( n_0(K, L) \) denotes the smallest degree of an intermediate field extension \( K \subsetneq F \subseteq L \). Under this assumption, \( B \) is a Chowla subspace of \( L \), and by Corollary~\ref{linear Chowla}, the subspace \( A \) is matched to \( B \).
\end{remark}

Next, we use Theorem~\ref{main linear} to recover, in the commutative setting, Theorem 2.8 in \cite{Eliahou 2}. This result is a linear analogue of Corollary~\ref{sym abelian}.

\begin{corollary}\label{sym lin match}
    Let \( K \subsetneq L \) be a field extension, and let \( A \) be a nonzero finite-dimensional \( K \)-subspace of \( L \). Then \( A \) is matched to itself if and only if \( 1 \notin A \).
\end{corollary}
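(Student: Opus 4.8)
The plan is to prove the two implications separately, with the forward direction being immediate and the converse resting on Theorem~\ref{main linear}. For necessity, if $A$ is matched to itself then we are exactly in the situation of Remark~\ref{Necessary for Matching} with $B = A$, which forces $1 \notin A$; no further argument is required.

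For the converse I would assume $1 \notin A$ and apply Theorem~\ref{main linear} with $B = A$. It then suffices to verify that for every pair of nonzero $K$-subspaces $S \subseteq A$ and $R \subseteq A \oplus K$ satisfying $\langle SR \rangle = S$, we have $\dim S \leq \dim(A/(R \cap A))$, equivalently $\dim S + \dim(R \cap A) \leq n$. Since both $S$ and $R \cap A$ are subspaces of $A$, so is $S + (R \cap A)$, giving $\dim(S + (R \cap A)) \leq n$ for free. Hence the target inequality will follow as soon as the sum is shown to be direct, i.e. $S \cap (R \cap A) = \{0\}$. As $S \subseteq A$, this intersection is just $S \cap R$, so the entire problem reduces to proving $S \cap R = \{0\}$. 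This reduction is the linear counterpart of the disjointness argument used for Corollary~\ref{sym abelian} in the group setting.

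The hard part will be establishing $S \cap R = \{0\}$, and this is where I expect to invoke Lemma~\ref{Linear Lemma}. I would argue by contradiction: suppose there is a nonzero $a \in S \cap R$. Regarding $a$ as an element of $R$, and noting $\dim S \leq n$ because $S \subseteq A$, Lemma~\ref{Linear Lemma} yields that $a$ is algebraic over $K$ and that $a K(a) \subseteq S$. Since $a \neq 0$ and $K(a)$ is a field, $a^{-1} \in K(a)$, so $1 = a \cdot a^{-1} \in a K(a) \subseteq S \subseteq A$, contradicting $1 \notin A$. Thus $S \cap R = \{0\}$, which closes the gap and verifies the hypothesis of Theorem~\ref{main linear}, showing that $A$ is matched to itself. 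The only subtlety to watch is making sure Lemma~\ref{Linear Lemma} is applied with the right roles (the element $x$ of the lemma being our $a \in R$, and the element of $S$ also being $a$), but beyond that the argument is routine.
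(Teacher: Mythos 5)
Your proposal is correct and takes essentially the same route as the paper: the same reduction of the hypothesis of Theorem~\ref{main linear} (with $B=A$) to showing $S \cap R = \{0\}$, the same dimension count via $\dim(S + (R \cap A)) \leq \dim A$, and the same appeal to Lemma~\ref{Linear Lemma}, with necessity handled by Remark~\ref{Necessary for Matching} exactly as in the paper. The only difference is cosmetic: where the paper derives the contradiction $1 \in S$ by explicitly manipulating the minimal polynomial of a nonzero $x \in S \cap R$, you get it directly from the lemma's conclusion $aK(a) \subseteq S$ together with $a^{-1} \in K(a)$ (valid since $a$ is algebraic and nonzero, so $K(a)$ is a field), which is a slightly slicker finish to the same argument.
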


\begin{proof}
Assume that \( 1 \notin A \). Suppose \( S \subseteq A \) and \( R \subseteq A \oplus K \) are nonzero \( K \)-subspaces such that \( \langle SR \rangle = S \). To apply Theorem~\ref{main linear}, we need to show that
\[
\dim S \leq \dim (A/(R \cap A)).
\]

\medskip

We claim that \( S \cap R = \{ 0 \} \).  Assume the contrary and let \( x \) be a nonzero element of \( S \cap R \).  Note that \( S \) is finite dimensional, since it is contained in \( A \). By Lemma~\ref{Linear Lemma},  \( x \) is algebraic over \( K \). We write out its minimal polynomial as follows:
\[ m_x(t) = c_0 + c_1 t + \cdots + c_{n-1} t^{n-1} + t^n,
\]
where \( n > 0 \) and \( c_0, \ldots , c_{n-1} \in K \). Plugging in \( x \) results in the equation
\[
0 = c_0 + c_1 x + \cdots + c_{n-1} x^{n-1} + x^n.
\]
Note that \( c_0 \neq 0 \) (the constant term of \( m_x(t) \) must be nonzero since \( x \neq 0 \)).  Solving the above equation for \( c_0 \) and multiplying through by \( c_0^{-1} \), we get
\[
1 = -c_0^{-1}x^n - c_0^{-1} \sum_{i=1}^{n-1} c_i x^i.
\]
The expression on the right belongs to the \( K \)-subspace \(  S \), since all positive powers of \( x \) lie in \( S \) (this follows from the equation \( \langle SR \rangle = S \) and the fact that \( x \in S \cap R \)). Thus \( 1 \in S \subseteq A \), a contradiction.  The claim is proved.

\medskip

We now compute
\begin{align*}
\dim A &\geq \dim (S + (R \cap A)) \\ 
&= \dim S + \dim (R \cap A) - \dim (S \cap (R \cap A)) \\
&= \dim S + \dim (R \cap A) - \dim (S \cap R) \\
&= \dim S + \dim (R \cap A),
\end{align*}
where the last equality follows from the claim. This gives us
\[
\dim S \leq \dim (A/(R \cap A)).
\]
By Theorem~\ref{main linear}, \( A \) is matched to itself. 

The converse was discussed in Remark~\ref{Necessary for Matching}.
\end{proof}

Another consequence of Theorem~\ref{main linear} is a characterization of field extensions with respect to the linear matching property (see Section~\ref{linear sec} to recall the definition).  The corollary below is the commutative case of Theorem~2.6 in \cite{Eliahou 2}. It can be viewed as a linear analogue of Corollary \ref{MatchingPropertyGroup}.

\begin{corollary} \label{linear match prop}
    A field extension $K\subsetneq L$ has the linear matching property if and only if  $L$ contains no nontrivial proper finite-dimensional extension over $K$.
\end{corollary}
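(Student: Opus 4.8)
The plan is to prove both implications, reading the structural hypothesis as: \( L \) contains no intermediate field \( F \) with \( K \subsetneq F \subsetneq L \) and \( [F:K] < \infty \). This is the exact linear counterpart of the ``no nontrivial proper finite subgroup'' condition underlying Corollary~\ref{MatchingPropertyGroup}, so I expect the argument to mirror that one, with a field \( K(b) \) (or \( K(R) \)) playing the role that the subgroup \( \langle R \rangle \) played there.

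For the direction asserting that the field-theoretic condition implies the linear matching property, I would bypass a direct appeal to Theorem~\ref{main linear} and instead reduce to Corollary~\ref{linear Chowla} by showing that every admissible \( B \) is automatically a Chowla subspace. Fix \( n \)-dimensional subspaces \( A, B \) with \( 1 \notin B \), and let \( b \in B \setminus \{0\} \). Since \( B \) is a \( K \)-subspace and \( 1 \notin B \), we cannot have \( b \in K \), so \( K \subsetneq K(b) \). The hypothesis then forbids \( K(b) \) from being a finite-dimensional field strictly between \( K \) and \( L \), leaving two possibilities: either \( [K(b):K] = \infty \), whence \( [K(b):K] \ge \dim B + 1 \) trivially, or \( K(b) = L \) with \( [L:K] < \infty \). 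In the latter case the key point is that \( 1 \notin B \) forces \( B \subsetneq L \), hence \( \dim B < [L:K] = [K(b):K] \), so again \( [K(b):K] \ge \dim B + 1 \). Thus \( B \) is Chowla, and Corollary~\ref{linear Chowla} gives that \( A \) is matched to \( B \). (Alternatively one could run the proof of Corollary~\ref{MatchingPropertyGroup} verbatim, using that \( \langle SR\rangle = S \) makes \( K(R) \) a finite-dimensional intermediate field forced to equal \( K \); the Chowla reduction is shorter because it applies Lemma~\ref{Linear Lemma} only to single elements.)

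For the converse I would argue contrapositively: given an intermediate field \( F \) with \( K \subsetneq F \subsetneq L \) and \( d := [F:K] < \infty \) (so \( d \ge 2 \)), I would exhibit one non-matchable pair. Writing \( F = K \oplus F' \) as \( K \)-spaces gives \( \dim F' = d-1 \) and \( 1 \notin F' \); choosing \( g \in L \setminus F \), I set \( A = F \) and \( B = F' \oplus Kg \). One checks \( \dim A = \dim B = d \) and \( 1 \notin B \). To see \( A \) is not matched to \( B \), I would apply the criterion of Theorem~\ref{main linear} to the single pair \( S = R = F \): here \( R \subseteq F \oplus Kg = B \oplus K \), and \( \langle SR \rangle = F = S \) because \( F \) is a field, while \( R \cap B = F \cap (F' \oplus Kg) = F' \), so that \( \dim(B/(R \cap B)) = 1 < d = \dim S \). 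This violates the inequality in Theorem~\ref{main linear}, so \( A \) is not matched to \( B \) and the extension lacks the linear matching property.

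The main obstacle I anticipate lies not in the converse, which is a direct computation once the pair \( (A,B) \) is guessed by analogy with the group construction in Corollary~\ref{MatchingPropertyGroup}, but in handling the finite-degree case correctly in the forward direction: one must combine the word ``proper'' in the hypothesis with the assumption \( 1 \notin B \) to prevent \( K(b) = L \) from spoiling the Chowla bound. A secondary point to state carefully is the reduction itself—that verifying the pointwise Chowla condition on \( B \) already suffices—so that the more delicate question of the field generated by all of \( R \) never needs to be addressed.
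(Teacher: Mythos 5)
Your proposal is correct, and while your converse matches the paper, your forward direction takes a genuinely different route. The paper verifies the criterion of Theorem~\ref{main linear} directly: given nonzero $S \subseteq A$ and $R \subseteq B \oplus K$ with $\langle SR \rangle = S$, it applies Lemma~\ref{Linear Lemma} to any $x \in R \setminus K$ to get $[K(x):K] \leq n$, concludes $K(x) = L$ from the hypothesis, hence $B = L$, contradicting $1 \notin B$; thus $R = K$, so $R \cap B = \{0\}$ and the inequality $\dim S \leq \dim \bigl(B/(R \cap B)\bigr)$ holds trivially. You instead observe that the hypothesis makes every eligible $B$ automatically a Chowla subspace --- for $b \in B \setminus \{0\}$ one has $b \notin K$ (else $1 \in B$), so $K(b)$ is either infinite-dimensional over $K$ or equal to $L$, and in the latter case $1 \notin B$ forces $B \subsetneq L$, giving $\dim B < [L:K] = [K(b):K]$ --- and then you invoke Corollary~\ref{linear Chowla}, which precedes this corollary in the paper, so there is no circularity. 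Your reduction is slicker in that it touches Lemma~\ref{Linear Lemma} only through the already-proved Chowla result and never analyzes $\langle SR \rangle = S$ directly; the paper's direct verification makes the forcing mechanism ($R = K$) explicit, which is the template it reuses in its other applications, and you correctly identified the one delicate point, namely ruling out trouble from $K(b) = L$ of finite degree via $1 \notin B$. Your converse is the paper's construction in mild generality: the paper first reduces to a simple extension, taking $A = K(a) = \langle 1, a, \ldots, a^{n-1} \rangle$ and $B = \langle x, a, \ldots, a^{n-1} \rangle$ with $x \notin K(a)$, and violates the criterion with $S = A$ and $R = \langle a, \ldots, a^{n-1} \rangle$; you work with an arbitrary finite-dimensional intermediate field $F = K \oplus F'$ and take $R = F$ rather than the codimension-one piece $F'$, which changes nothing since $R \cap B = F'$ either way and $\dim \bigl(B/(R\cap B)\bigr) = 1 < d = \dim S$ in both versions. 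All the side verifications you flag ($1 \notin B$, $\dim B = d$, $R \subseteq B \oplus K$, and $\langle SR \rangle = S$ because $F$ is a field) check out.
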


\begin{proof}
Assume that \( L \) contains no nontrivial proper finite-dimensional extension over \( K \). Let \( A \) and \( B \) be two \( n \)-dimensional \( K \)-subspaces of \( L \), with \( n > 0 \) and \( 1 \notin B \). We aim to show that \( A \) is matched to \( B \) using Theorem~\ref{main linear}. 

To that end, suppose \( S \subseteq A \) and \( R \subseteq B \oplus K \) are nonzero \( K \)-subspaces such that \( \langle SR \rangle = S \). 

\medskip

We claim that  \( R = K \). To see this, assume the contrary and let \( x \in R \setminus K \). By Lemma~\ref{Linear Lemma}, \( [K(x):K] \leq n \). Since \( x \notin K \) and \( L \) contains no nontrivial proper finite-dimensional extension over \( K \), it follows that \( K(x) = L \).  Hence
\[
\dim L = [K(x):K] \leq n.
\]
Since \( \dim B = n \), we must have \( B = L \), contradicting \( 1 \notin B \). The claim is established. 

\medskip

Now observe that \( R \cap B = \{ 0 \} \), by the claim and the fact that \( 1 \notin B \). Hence
\[
\dim S \leq \dim A = \dim B = \dim (B/(R \cap B)).
\]
We now apply Theorem~\ref{main linear} to conclude that \( A \) is matched to \( B \).

\medskip

Conversely, assume that \( K \subseteq L \) admits a nontrivial proper finite-dimensional extension over \( K \). Then there exists an element \( a \in L \) of finite degree \( n \geq 2 \) over \( K \) such that \( K(a) \subsetneq L \). Choose an element \( x \in L \setminus K(a) \), and define the \( K \)-subspaces \( A \) and \( B \) of \( L \) by
\[
A = \langle 1, a, a^2, \dots, a^{n-1} \rangle,
\]
\[
B = \langle x, a, a^2, \dots, a^{n-1} \rangle.
\]
We will use Theorem~\ref{main linear} to show that \( A \) is not matched to \( B \). Take \( S = A \) and \( R = \langle a, a^2, \dots, a^{n-1} \rangle \). Then \( \langle SR \rangle = S \) (note that \( A = K(a) \)).

However, we have
\[
\dim S = n > 1 = \dim (B/(R \cap B)),
\]
violating the condition of Theorem~\ref{main linear}. Therefore, \( A \) is not matched to \( B \), implying that the field extension \( K \subseteq L \) does not have the linear matching property.
\end{proof}

Our final objective is to use Theorem~\ref{main linear} to establish the following linear counterpart to Corollary \ref{large sumsets}.

\begin{corollary}
Let \( K \subsetneq L \) be a field extension, and let \( A, B \subseteq L \) be \( n \)-dimensional \( K \)-subspaces of \( L \), with \( n > 0 \) and \( 1 \notin B \). Assume that for every \( a \in L \) and every nontrivial proper finite-dimensional intermediate subfield \( K \subseteq H \subseteq L \), the following inequality holds:
\begin{align*}
    \dim(aH \cap A) + \dim(H \cap B) < [H : K] + 1.
\end{align*}
Then \( A \) is matched to \( B \).
\end{corollary}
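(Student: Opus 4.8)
The plan is to verify the condition of Theorem~\ref{main linear}, closely mirroring the strategy of Corollary~\ref{large sumsets} but with the subgroup \( \langle R \rangle \) replaced by the subfield \( H = K(R) \) generated by \( K \) and \( R \). So I would begin by fixing nonzero \( K \)-subspaces \( S \subseteq A \) and \( R \subseteq B \oplus K \) with \( \langle SR \rangle = S \), and reduce everything to showing \( R \cap B = \{ 0 \} \); once this is known, \( \dim (B/(R \cap B)) = n \geq \dim S \), and Theorem~\ref{main linear} applies.

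The central object is \( H = K(R) \). Choosing a nonzero \( a \in S \), I would first record that \( \dim S \leq n \) (since \( S \subseteq A \)), so Lemma~\ref{Linear Lemma} guarantees that every \( x \in R \) is algebraic over \( K \) with \( [K(x):K] \leq n \). Because \( R \) is finite-dimensional and spanned by finitely many such algebraic elements, \( H \) is a finite extension of \( K \); moreover, since all generators are algebraic, the \( K \)-algebra \( K[R] \) coincides with the field \( H \). The relation \( \langle SR \rangle = S \) shows that \( S \) is closed under multiplication by elements of \( R \), so iterating gives \( a\,K[R] \subseteq S \), that is, \( aH \subseteq S \subseteq A \). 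In particular \( [H:K] = \dim (aH) \leq \dim S \leq n \).

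Next I would establish that \( H \) is a \emph{proper} subfield: if \( H = L \), then \( [L:K] \leq n = \dim B \) would force \( B = L \), contradicting \( 1 \notin B \). Then, arguing by contradiction, I would suppose \( H \neq K \) and apply the hypothesis with this nontrivial proper finite-dimensional \( H \) and the chosen \( a \). Since \( aH \subseteq A \), we have \( aH \cap A = aH \) and \( \dim (aH \cap A) = [H:K] \), so the inequality collapses to \( \dim (H \cap B) < 1 \), i.e.\ \( H \cap B = \{ 0 \} \). But \( H \neq K \) forces \( R \not\subseteq K \), so there is \( x \in R \setminus K \); writing \( x = b + c \) with \( b \in B \) and \( c \in K \), nonvanishing of the \( B \)-component gives \( 0 \neq b = x - c \in H \cap B \), a contradiction. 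Hence \( H = K \), so \( R \subseteq K \), and since \( 1 \notin B \) forces \( K \cap B = \{ 0 \} \), we conclude \( R \cap B = \{ 0 \} \).

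I expect the main obstacle to be the passage from the group-theoretic \( a\langle R \rangle \subseteq S \) to its linear analogue \( aH \subseteq S \): here one must combine the algebraicity supplied by Lemma~\ref{Linear Lemma}, the identity \( K[R] = K(R) \) valid for algebraic generators, and the multiplicative closure coming from \( \langle SR \rangle = S \). The remaining delicate points—verifying that \( H \) is proper, so that the hypothesis is even applicable, and producing a nonzero element of \( H \cap B \) from an element of \( R \setminus K \)—are exactly where the conditions \( \dim B = n \) and \( 1 \notin B \) enter, paralleling the use of \( 1 \notin B \) at the end of Corollary~\ref{large sumsets}.
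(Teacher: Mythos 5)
Your proof is correct and follows essentially the same route as the paper: the paper likewise reduces everything to the claim that \( R = K \) (equivalently \( R \cap B = \{0\} \)), shows the relevant intermediate field is proper via the forced \( B = L \) contradiction with \( 1 \notin B \), applies the hypothesis to collapse the inequality to \( \dim(H \cap B) < 1 \), and derives the final contradiction from the nonzero element \( b = x - c \in H \cap B \) exactly as you do. The only deviation is your choice \( H = K(R) \) in place of the paper's \( H = K(x) \) for a single \( x \in R \setminus K \) (where \( aK(x) \subseteq S \) comes directly from Lemma~\ref{Linear Lemma}), which costs you the extra---but correct---observations that \( K[R] = K(R) \) is finite over \( K \) and that iterating \( \langle SR \rangle = S \) gives \( aK[R] \subseteq S \).
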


\begin{proof}
As usual, we assume \( S \subseteq A \) and \( R \subseteq B \oplus K \) are nonzero subspaces such that \( \langle SR \rangle = S \). To apply Theorem~\ref{main linear}, we need to show that
\begin{align*}
\dim S \leq \dim (B/(R \cap B)).
\end{align*}

We claim that  \( R = K \). Assume the contrary, and let \( x \in R \setminus K \). Let \( a \in S \setminus \{ 0 \} \).  Applying Lemma~\ref{Linear Lemma}, we find that \( [K(x):K]\leq n \) and \( aK(x) \subseteq S \).  Clearly \( [K(x):K] > 1 \), as well. Note that \( K(x) \neq L \), since otherwise, as in the previous proof, we would have \( B = L \), contradicting \( 1 \notin B \). We now apply our hypothesis (taking \( H = K(x) \)), to obtain 
\[
\dim(aK(x) \cap A) + \dim(K(x) \cap B) < [K(x) : K] + 1.
\]
Since \( aK(x) \subseteq S \subseteq A \), this simplifies to 
\[
\dim K(x) + \dim(K(x) \cap B) < [K(x) : K] + 1,
\]
forcing \( \dim(K(x) \cap B) < 1\).  But this is impossible, since \( x \in R \setminus K \) and \( R \subseteq B \oplus K \). The claim is proved.

\medskip

By the claim and the fact that \( 1 \notin B \), we have \( R \cap B = \{ 0 \} \), so the inequality \( \dim S \leq \dim (B / (R \cap B)) \) holds trivially.  Applying Theorem~\ref{main linear} completes the proof.
\end{proof}

\begin{remark}
It seems plausible that Theorems \ref{CharForAbelianGroups} and \ref{main linear} admit extensions to the non-commutative setting.  Possible approaches include the use of Kemperman's \( d \)-transform (see~\cite{Olson}) along with its linearization as presented in~\cite{Eliahou 3}.  Hamidoune’s isoperimetric method \cite{Hamidoune2} may also provide a viable framework for pursuing a generalization in the group setting. 

\end{remark}
\bigskip

\noindent \textbf{Data sharing:} Data sharing not applicable to this article as no datasets were generated or analysed.\\
\textbf{Conflict of interest:} To our best knowledge, no conflict of interests, whether of financial or personal nature, has influenced the work presented in this article.

\end{document}